\newtheorem{thm}{Theorem}[section]
\newtheorem{proposition}[thm]{Proposition}
\newtheorem{remark}[thm]{Remark}
\newtheorem{definition}[thm]{Definition}
\newtheorem{corollary}[thm]{Corollary}
\newtheorem{lemma}[thm]{Lemma}
\newcommand{\ed}{\stackrel{\mbox{\tiny $(d)$}}{=}}
\def\qed{\hfill $\square$ \bigskip}
\def\beq{\begin{equation}}               
\def\eeq{\end{equation}}                 
\def\bea{\begin{eqnarray}}             
\def\eea{\end{eqnarray}}               
\def\be*{\begin{eqnarray*}}             
\def\ee*{\end{eqnarray*}}               
\def\ba{\begin{array}}                  
\def\ea{\end{array}}                    
\def\beqlb{\begin{eqnarray}} \def\eeqlb{\end{eqnarray}}
\def\beqnn{\begin{eqnarray*}} \def\eeqnn{\end{eqnarray*}}
\def\<{\langle}  \def\>{\rangle}
\def\bde{\begin{definition}}
\def\ede{\end{definition}}
\def\bth{\begin{thm}}
\def\eth{\end{thm}}
\def\bpr{\begin{proposition}}
\def\epr{\end{proposition}}
\def\ble{\begin{lemma}}
\def\ele{\end{lemma}}
\def\bcor{\begin{corollary}}
\def\ecor{\end{corollary}}
\def\bre{\begin{remark}}
\def\ere{\end{remark}}
\def\p{\mathbb{P}}
\def\ee{\varepsilon}
\def\qed{{\hfill $\Box$ \bigskip}}
\def\qed{{\hfill $\Box$ \bigskip}}
\title[Breadth first search coding and Lamperti representation]
{Breadth first search coding of multitype forests with application to Lamperti representation}
\author{Lo\"ic Chaumont}
\address{Lo\"ic Chaumont -- LAREMA -- UMR CNRS 6093, Universit\'e d'Angers, 2 bd Lavoisier, 49045 Angers cedex~01}
\email{loic.chaumont@univ-angers.fr}
\keywords{Lamperti representation, compound Poisson process, multitype branching forests, breadth first search coding, 
random walks}
\subjclass[2010]{60C05, 05C05, 60G50, 60G51, 60B10}
\date{\today}
\begin{document}


\maketitle

\begin{center}
Dedicated to the memory of Marc Yor
\end{center}

\vspace*{.3in}

\begin{abstract} We obtain a bijection between some set of multidimensional sequences and this of $d$-type plane forests which is
based on the breadth first search algorithm. This coding sequence is related to the sequence of population sizes indexed by the generations, through a Lamperti type transformation. The same transformation in then obtained in continuous time for multitype branching processes with discrete values. We show that any such process can be obtained from a $d^2$ dimensional compound
Poisson process time changed by some integral functional. Our proof bears on the discretisation of branching  forests with edge 
lengths.
\end{abstract}

\vspace*{.3in}

\section{Introduction}\label{intro}

A famous result from Lamperti \cite{la} asserts that any continuous state branching process can be represented as a spectrally 
positive L\'evy process, time changed by the inverse of some integral functional. This transformation is invertible and defines a bijection between the set of spectrally positive L\'evy processes and this of continuous  state branching processes. Lamperti's 
result is the source of an extensive mathematical literature in which it is mainly used as a tool in branching theory. However,
recently Lamperti's transformation itself has been the focus of some research papers. In \cite{clu} several proofs of this result
are displayed and in \cite{cpu} an extension of the transformation to the case of branching processes with immigration is 
obtained. 

In this work we show an extension of Lamperti's transformation to continuous time, integer valued, multitype branching processes. 
More specifically, let $Z=(Z^{(1)},\dots,Z^{(d)})$ be such a process issued from $x\in\mathbb{Z}_+^d$, then we shall prove that 
\[(Z^{(1)}_t,\dots,Z^{(d)}_t)=x+\left(\sum_{i=1}^dX^{i,1}_{\int_0^t Z^{(i)}_s\,ds},\dots,\sum_{i=1}^dX^{i,d}_{\int_0^t Z^{(i)}_s\,ds}\right)
\,,\;\;\;t\ge0\,,\]
where $X^{(i)}$ are $d$ independent $\mathbb{Z}_+^d$-valued compound Poisson processes. As in the one dimensional case, absorption of $Z$ at 0 means that in this transformation, the process $X$ must be stopped
at some random time which, in the multitype case, will be defined as the `first passage time' at level $-x$ by
the multidimensional random field 
\[{\rm X}=(t_1,\dots,t_d)\mapsto\left(\sum_{i=1}^dX_{t_i}^{i,j},\,j\in[d]\right)=
X^{(1)}_{t_1}+\dots+X^{(d)}_{t_d}\,.\]
Multitype Lamperti transformation is not invertible as in the one dimensional case. However, by considering
the whole branching structure behind the branching process, it is possible, to obtain a one-to-one relationship
between $(X^{(1)},\dots,X^{(d)})$ and some decomposition of the $d$ dimensional process $Z$ into $d^2$
processes, see Theorem \ref{2379}.       

The proofs of these results pass through a special coding of multitype plane forests which leads to a Lamperti type representation of discrete time, multitype branching processes. Results in discrete time are displayed 
and proved in Section \ref{deterministic} whereas the next section is devoted to the statements of our results 
in continuous time. The  latter will be proved in Section~\ref{proofs}.  

\section{Main results in continuous time}\label{main}

In all this work, we use the notation $\mathbb{R}_+=[0,\infty)$, $\mathbb{Z}_+=\{0,1,2,\dots\}$ and for any positive integer $d$, we set $[d]=\{1,\dots,d\}$. We also define the following partial order on $\mathbb{R}^d$
by setting $x=(x_1,\dots,x_d)\ge y=(y_1,\dots,y_d)$, if $x_i\ge y_i$, for all $i\in [d]$. Moreover, we write 
$x>y$ if $x\ge y$ and if there exists $i\in[d]$ such that $x_i>y_i$. We will denote by $e_i$ the $i$-th unit 
vector of $\mathbb{Z}_+^d$.\\

Fix $d\ge2$, let $\nu=(\nu_1,\dots,\nu_d)$, where $\nu_i$ is any probability measure on $\mathbb{Z}_+^d$ such that $\nu_i(e_i)<1$
and let $Z=(Z^{(1)},\dots,Z^{(d)})$ be a $d$-type continuous time and $\mathbb{Z}_+^d$-valued branching process with progeny distribution $\nu=(\nu_1,\dots,\nu_d)$ and such that type $i\in[d]$ has reproduction 
rate $\lambda_i>0$. For $i,j\in [d]$, the quantity
\[m_{ij}=\sum_{x\in\mathbb Z_+^d}x_j\nu_i(x)\,,\]
 corresponds to the mean number of children of type $j$, given by an individual of type $i$. Let $M:=(m_{ij})_{i,j\in [d]}$
be the mean matrix of $Z$. Recall that if $M$ (or equivalently $\nu$) is irreducible, then according to 
Perron-Frobenius Theorem, it admits a unique eigenvalue $\rho$ which is simple,
 positive and with maximal modulus. If moreover, $\nu$ is non degenerate, then extinction holds if and only if $\rho\le1$, see \cite{ha1}, \cite{mo} and Chapter V of \cite{an}. If $\rho=1$, we say that $Z$ is critical and if $\rho<1$, we say that $Z$ 
 is subcritical.\\

We now define the underlying compound Poisson process in the Lamperti representation of $Z$ that will be presented in
Theorems \ref{2379} and \ref{2415}.  Let $X=(X^{(1)},\dots,X^{(d)})$, where $X^{(i)}$, $i\in[d]$ are $d$ independent 
$\mathbb{Z}^d$-valued compound Poisson 
processes. We assume that $X^{(i)}_0=0$ and that $X^{(i)}$ has rate $\lambda_i$ and jump distribution 
\begin{equation}\label{7458}
\mu_i({\rm k})=\frac{\tilde{\nu}_i({\rm k})}{1-\tilde{\nu}_i(0)}\,,\;\;\mbox{if ${\rm k}\neq0$ and}\;\;\;\mu_i(0)=0\,,
\end{equation}
where 
\begin{equation}\label{4590}
\tilde{\nu}_i(k_1,\dots,k_d)=\nu_i(k_{1},\dots,k_{i-1},k_{i}+1,k_{i+1},\dots,k_{d})\,.
\end{equation}
In particular, with the notation $X^{(i)}=(X^{i,1},\dots,X^{i,d})$, for all $i=1,\dots,d$, the process $X^{i,i}$ 
is a $\mathbb{Z}$-valued, downward skip free, compound Poisson process, i.e. 
$\Delta X_t^{i,i}=X^{i,i}_t-X^{i,i}_{t-}\ge-1$, $t\ge0$, with $X_{0-}=0$ and for all $i\neq j$, the process 
$X^{i,j}$ is a standard Poisson process. We emphasize that in this definition, some of the processes 
$X^{i,j}$, $i,j\in[d]$ can be identically equal to 0.\\

We first present a result on passage times of the multidimensional random field 
\[{\rm X}:(t_1,\dots,t_d)\mapsto\left(\sum_{i=1}^dX_{t_i}^{i,j},\,j\in[d]\right)=
X^{(1)}_{t_1}+\dots+X^{(d)}_{t_d}\,,\]
which is a particular case of additive L\'evy processes, see \cite{kx} and the references therein.  
Henceforth, a process such as ${\rm X}$ will be called an {\it additive $($downward skip free$)$ compound 
Poisson process}. 

\begin{thm}\label{6803} Let $x=(x_1,\dots,x_d)\in\mathbb{Z}_+^d$. Then there exists a $($unique$)$ 
random time $T_x=(T_x^{(1)}\dots,T_x^{(d)})\in\overline{\mathbb{R}}_+^d$ such that almost surely,
\begin{equation}\label{3678}
x_j+\sum_{i,T^{(i)}_x<\infty}X^{i,j}(T_x^{(i)})=0\,,\;\;\;\mbox{for all $j$ such that $T^{(j)}_x<\infty$}\,,
\end{equation}
and if $T'_x$ is any random time satisfying $(\ref{3678})$, then  $T'_x\ge T_x$. The time $T_x$ will be called 
the first passage time of the additive compound Poisson process $X$ at level $-x$. 

The process $(T_x,x\in\mathbb{Z}_+^d)$ is increasing and additive, that is, for all $x,y\in\mathbb{Z}_+^d$,
\begin{equation}\label{6290}T_{x+y}\ed T_x+\tilde{T}_y\,,
\end{equation}
where $\tilde{T}_y$ is an independent copy of $T_y$.
The law of $T_x$ on $\mathbb{R}_+^d$ is given by 
\begin{eqnarray*}
&&\p(T_x\in dt,\,X^{i,j}_{t_i}=x_{i,j},\,1\le i,j\le d)=\\
&&{}\\
&&\qquad\frac{\mbox{det}(-x_{i,j})}{t_1t_2\dots t_d}\prod_{i=1}^d\p(X^{i,j}_{t_i}=x_{i,j},\,1\le j\le d)dt_1dt_2\dots dt_d\,, 
\end{eqnarray*}
where the support of this measure is included in
$\left\{x_{ij}\in\mathbb{Z}:x_{ij}\ge0,\,x_{ii}\le0,\, \sum_{i=1}^d x_{i,j}=-x_i \right\}$.
\end{thm}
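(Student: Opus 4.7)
The plan is to address the three claims of Theorem \ref{6803} in order, with the explicit law being the main technical step.

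\textbf{Existence, uniqueness and additivity.} For existence, I would exploit the fact that $X^{i,j}$ is non-decreasing when $i\neq j$ while $X^{j,j}$ is downward skip free. Set $T^{(j),0}=0$ and, given $(T^{(j),n})_{j\in[d]}$, let $T^{(j),n+1}$ be the first time $t\ge T^{(j),n}$ at which $X^{j,j}_t=-x_j-\sum_{i\neq j}X^{i,j}(T^{(i),n})$, which is well defined (possibly $+\infty$) by skip-freeness. Monotonicity of the off-diagonal processes ensures that $T^{(j),n}$ is non-decreasing in $n$, and its limit $T_x^{(j)}\in\overline{\mathbb R}_+$ is easily seen to satisfy \rf{3678}. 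Minimality against any $T'_x$ satisfying \rf{3678} follows by induction on $n$: a strictly smaller candidate vector cannot balance the $j$-th coordinate. Additivity then follows from the strong Markov property at $T_x$: on the event $\{T_x<\infty\}$ the shifted processes $\tilde X^{(i)}(t):=X^{(i)}(T_x^{(i)}+t)-X^{(i)}(T_x^{(i)})$ form an independent copy of $X$, whose first passage time at level $-y$ is an independent copy $\tilde T_y$ of $T_y$, and combining \rf{3678} for $T_x$ and for $\tilde T_y$ yields $T_{x+y}\ed T_x+\tilde T_y$.

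\textbf{The explicit law.} This is the delicate step and what I expect to be the main obstacle. The strategy is to condition on the jump counts $N^{(i)}_{t_i}=n_i$ and on the jump sums $(x_{i,1},\dots,x_{i,d})$, then invoke a multi-type cycle lemma obtained from the breadth-first search coding. Given these data, the jump times of $X^{(i)}$ are the order statistics of $n_i$ independent uniforms on $[0,t_i]$ and the jump vectors are an exchangeable family with law $\mu_i^{\otimes n_i}$ conditioned on their sum. The event $\{T_x\in dt\}$ decomposes into two independent constraints: first, the $n_i$-th jump of $X^{(i)}$ must occur at time $t_i$, contributing a density factor $n_i/t_i$; second, reading the joint sequence of jumps in breadth-first search order must produce the coding sequence of a $d$-type plane forest with population vector $(x_1,\dots,x_d)$. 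Through the bijection of Section \ref{deterministic} between such coding sequences and multi-type plane forests, this second constraint is a multi-type ballot condition whose probability, among arrangements of the given jump multisets, equals $\det(-x_{ij})/\prod_i n_i$; this matrix-tree style identity is the true substance of the computation and I would derive it from the discrete Lamperti representation of Section \ref{deterministic}, reducing in the one-dimensional case to the classical factor $x/n$ of the Dvoretzky--Motzkin cycle lemma. Combining, the factors $\prod_i n_i$ cancel, leaving $\det(-x_{ij})/(t_1\cdots t_d)$ times $\prod_i\p(X^{i,j}_{t_i}=x_{i,j},\,1\le j\le d)$ once one sums over $n_i$ and jump multisets. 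The support description is immediate from the sign constraints on the jumps and from the balance condition in \rf{3678}.
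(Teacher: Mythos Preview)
Your proposal is correct and rests on the same core ingredient as the paper --- the discrete multitype ballot identity $\det(-x_{ij})/\prod_i n_i$ from \cite{cl} --- but the route you take is organised differently. The paper does \emph{not} work directly in continuous time: it writes $X^{(i)}_t=Y^{(i)}(N^{(i)}_t)$ with $Y^{(i)}$ the embedded random walk and $N^{(i)}$ an independent Poisson process, applies Lemma~\ref{1377} to $Y$ to get the smallest discrete solution ${\rm s}$, and then simply sets $T_x^{(i)}=\inf\{t:N^{(i)}_t=s_i\}$. Existence, minimality, additivity and the explicit law are all proved at the level of $Y$ and then pushed through the Poisson time change; in particular the density formula is obtained in one line from part~3 of Theorem~\ref{5891} and the Gamma density of the $n_i$-th Poisson jump (your factor $n_i/t_i$ is exactly the ratio of that Gamma density to $\p(N^{(i)}_{t_i}=n_i)$, so the two computations coincide after the $n_i$'s cancel). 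Your direct continuous-time iteration and order-statistics calculation are perfectly valid and make the mechanism more visible; the paper's reduction is terser and avoids redoing in $\mathbb R_+$ what has already been done in $\mathbb Z_+$.

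One point in your additivity argument deserves care. You invoke the strong Markov property at the multi-time $T_x$ to say that the shifted processes $\tilde X^{(i)}$ form an independent copy of $X$, but $T_x^{(i)}$ is not a stopping time for the natural filtration of $X^{(i)}$ alone (it depends on all coordinates). A multi-parameter strong Markov statement is needed here and is not entirely standard. The paper's reduction sidesteps this: once one knows that the discrete shift $\tilde Y^{(i)}_k=Y^{(i)}_{s_{1,i}+k}-Y^{(i)}_{s_{1,i}}$ is independent of the pre-shift part (which is transparent from the forest coding and the branching property), the continuous-time independence follows because the Poisson clocks $N^{(i)}$ are independent of $Y$ and each $T_x^{(i)}$ is, conditionally on $Y$, an honest one-parameter stopping time for $N^{(i)}$. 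Finally, a small slip: in your law paragraph, $(x_1,\dots,x_d)$ is the vector of \emph{roots}, not the population vector; the latter is $(n_1,\dots,n_d)$.
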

\noindent 
Note that from the additivity property (\ref{6290}) of $(T_x,\,x\in\mathbb{Z}_+^d)$, we derive that the law of
this process is characterised by the law of the variables $T_{e_i}$ for $i\in[d]$.\\

\noindent As the above statement suggests, some coordinates of the time $T_x$ may be infinite. More specifically, we have:

\begin{proposition}\label{1269}  Assume that $\nu$ irreducible and non degenerate.
\begin{itemize}
\item[$1.$] If $\nu$ is $($sub$)$critical, then almost surely, for all $x\in\mathbb{Z}_+^d$ and for all $i\in[d]$, $T_x^{(i)}<\infty$.
\item[$2.$] If $\nu$ is super critical, then for all $x\in\mathbb{Z}_+^d$, with some probability $p>0$, $T_x^{(i)}=\infty$, for all 
$i\in[d]$ and  with probability $1-p$,  $T_x^{(i)}<\infty$, for all $i\in[d]$.
\end{itemize}
\end{proposition}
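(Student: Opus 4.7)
The plan is to identify $T_x$ with the integrated occupation times of the continuous-time multitype branching process $Z$ that is Lamperti-related to $X$, and then to read off the result from the classical extinction/survival dichotomy for $Z$. Let $Z=(Z^{(1)},\dots,Z^{(d)})$ be the $d$-type continuous-time branching process with progeny distribution $\nu$ and reproduction rates $\lambda_i$, started from $x$. Granting the Lamperti representation to be established in Theorem \ref{2415},
\begin{equation*}
Z^{(j)}_t=x_j+\sum_{i=1}^d X^{i,j}\!\left(\int_0^t Z^{(i)}_s\,ds\right),\qquad j\in[d],
\end{equation*}
denote by $\zeta\in(0,\infty]$ the extinction time of $Z$ and set $\tau_i:=\int_0^\zeta Z^{(i)}_s\,ds\in[0,\infty]$ for $i\in[d]$. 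On $\{\zeta<\infty\}$ all the $\tau_i$ are finite and substituting $t=\zeta$ in the identity above yields exactly \rf{3678}; on $\{\zeta=\infty\}$, the restriction of \rf{3678} to those $i$ with $\tau_i<\infty$ remains valid. The uniqueness/minimality characterisation of $T_x$ in Theorem \ref{6803} then forces $T_x^{(i)}=\tau_i$ almost surely for every $i$.

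Granting this identification, item 1 is immediate. Under irreducibility, non-degeneracy and $\rho\le 1$, classical multitype branching theory (e.g.\ Chapter V of \cite{an}) yields $\mathbb{P}(\zeta<\infty)=1$; since $Z$ is $\mathbb{Z}_+^d$-valued and has only finitely many jumps on $[0,\zeta]$, each $\tau_i$ is finite and $T_x^{(i)}<\infty$ almost surely.

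For item 2, assume $\rho>1$ and set $p=p(x):=\mathbb{P}(\zeta=\infty\mid Z_0=x)$, which is strictly positive whenever $x\ne 0$. On the extinction event $\{\zeta<\infty\}$ the argument of item 1 still gives $T_x^{(i)}<\infty$ for every $i$. The real difficulty is to show that on the survival event $\{\zeta=\infty\}$ one has $\tau_i=\infty$ \emph{simultaneously} for every $i\in[d]$, so that no mixed scenarios (some coordinates infinite, others finite) occur with positive probability. This is the point where irreducibility is decisive: by the Perron--Frobenius theorem applied to the mean matrix $M$, combined with the multitype Kesten--Stigum limit theorem (or a direct renewal argument on the embedded discrete-time skeleton exploiting the strict positivity of the Perron eigenvector), on survival each coordinate $Z^{(i)}_t$ tends to $+\infty$ as $t\to\infty$, so $\tau_i=\int_0^\infty Z^{(i)}_s\,ds=\infty$ for every $i$. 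This establishes the stated dichotomy, with $p$ equal to the survival probability of $Z$.
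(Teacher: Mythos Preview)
Your approach differs from the paper's: you go through the continuous-time Lamperti representation (Theorem~\ref{2379}) and the extinction dichotomy of $Z$, whereas the paper passes to the discrete skeleton. Writing $X^{(i)}_t=Y^{(i)}(N^{(i)}_t)$ with $Y^{(i)}$ the jump chain and $N^{(i)}$ an independent Poisson process, the proof of Theorem~\ref{6803} shows $T_x^{(i)}=\inf\{t:N^{(i)}_t=s_i\}$ where ${\rm s}$ is the smallest solution of $(x,Y)$; part~1 of Theorem~\ref{5891} then gives exactly the required dichotomy for ${\rm s}$ (all coordinates finite in the (sub)critical case; all finite or all infinite in the supercritical case) via the discrete branching forest, and the time change transfers it to $T_x$. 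This is shorter and, importantly, avoids your forward reference to Theorem~\ref{2379}, which in the paper is proved only afterwards.

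Your argument also has a genuine gap at the identification $T_x=\tau$. Showing that $\tau=(\int_0^\infty Z^{(i)}_s\,ds)_i$ solves \rf{3678} and invoking minimality yields only $T_x\le\tau$, not equality: Theorem~\ref{6803} asserts uniqueness of the \emph{smallest} solution, not uniqueness of solutions. The equality $T_x^{(i)}=\int_0^\infty Z^{(i)}_s\,ds$ is in fact obtained inside the proof of Theorem~\ref{2379} via the discrete approximation $T_x^{(i)}=\lim_{\delta\to0}\delta N_i^\delta$, not by a minimality argument, so you must either cite that step explicitly or supply an independent argument that the time-changed trajectory cannot overshoot $T_x$ (which requires some care, since the clocks $\int_0^t Z^{(i)}_s\,ds$ need not reach $T_x^{(i)}$ simultaneously). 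Finally, Kesten--Stigum imposes an $x\log x$ moment condition that is nowhere assumed; for the supercritical part you only need that on survival each $Z^{(i)}$ is a.s.\ unbounded in the irreducible case, which holds without moment hypotheses.
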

\noindent There are instances of reducible distributions $\nu$ such that for some $x\in\mathbb{Z}_+^d$,  with positive probability, $T_x^{(i)}<\infty$, for all $i\in A$ and $T_x^{(i)}=\infty$, for all $i\in B$,  $(A,B)$ being a non trivial partition of $[d]$. It is the case for instance when $d=2$, for $x=(1,1)$, $X^{1,2}=X^{2,1}\equiv 0$, $0<m_{11}<1$ and $m_{22}>1$.\\

Then we define $d$-type branching forests with edge lengths as finite 
sets of independent branching trees with edge lengths. We say that such a forest is issued from $x=(x_1,\dots,x_d)\in\mathbb{Z}_+^d$ (at time $t=0$), if it contains $x_i$ trees whose root is of type $i$. The discrete
skeleton of a branching forest with edge lengths is a discrete branching forest with progeny distribution $\nu$. The edges issued 
from vertices of type $i$ are exponential random variables with parameter $\lambda_i$. These random variables are independent between themselves and are independent of the discrete skeleton. A realisation of such a forest is represented in Figure \ref{fig1}. Then to each $d$-type forest with edge lengths $F$
is associated the branching process $Z=(Z^{(1)},\dots,Z^{(d)})$, where $Z^{(i)}$ is the number of individuals 
in $F$, alive at time $t$.  

\begin{definition}\label{2593}
For $i\neq j$, we denote by $Z^{i,j}_t$ the total number of individuals of type $j$ whose parent has type $i$ and 
who were born before time $t$. The definition of $Z^{i,i}_t$ for $i\in[d]$
is the same, except that we add the number of roots of type $i$ 
and we subtract the number of individuals of type $i$ who died 
before time $t$. 
\end{definition}

Then we readily check that the branching process $Z=(Z_1,\dots,Z_d)$ which is associated to this forest can 
be expressed in terms of the processes $Z^{i,j}$, as follows:
\[Z^{(j)}_t=\sum_{i=1}^dZ^{i,j}_t\,,\;\;\;j\in[d]\,.\]

\begin{figure}[hbtp]

\vspace*{-.3cm}

\hspace*{-0.5cm}{\includegraphics[height=350pt,width=500pt]{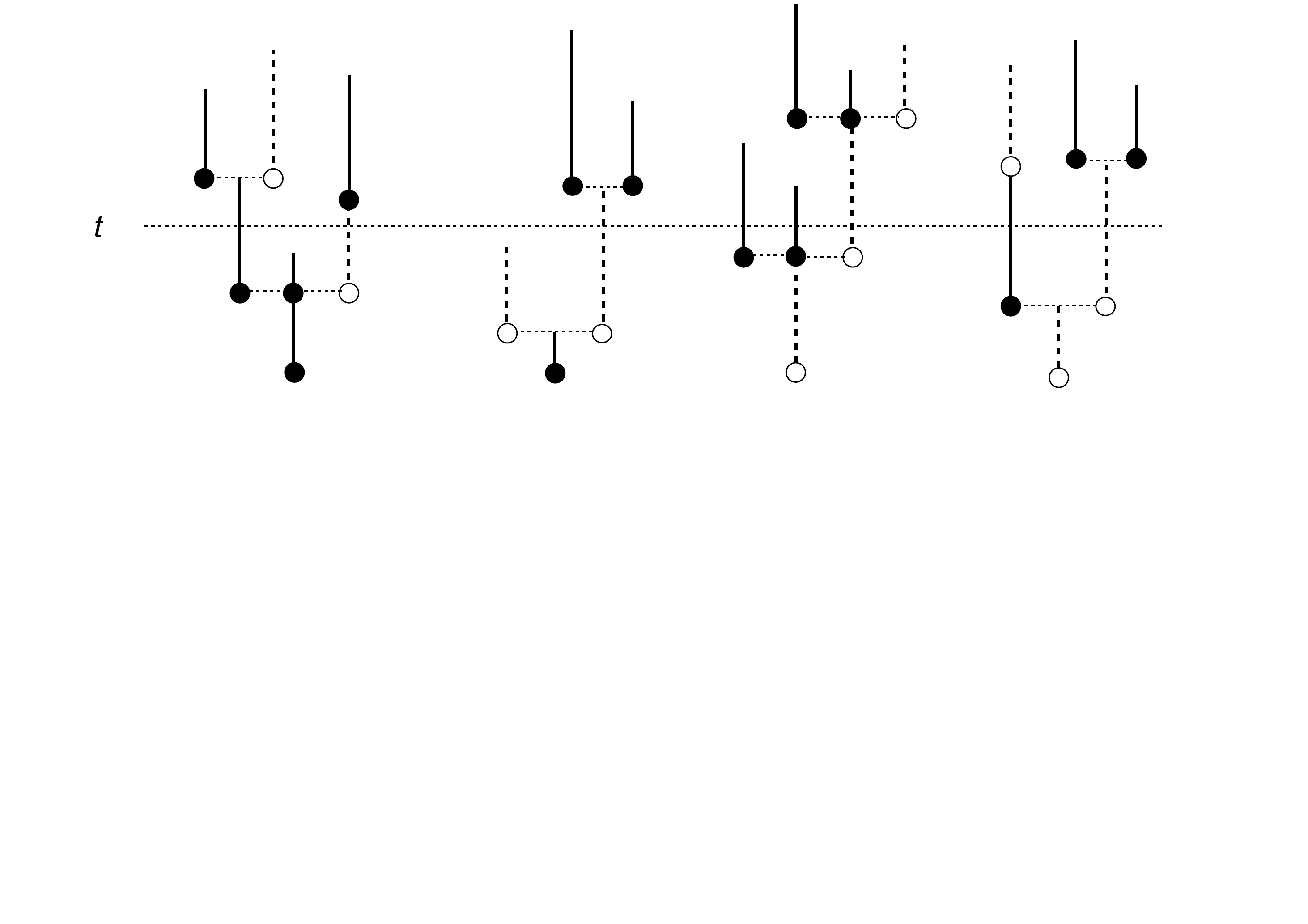}}
 
\vspace{-6.5cm}

\caption{A two type forest with edge length issued from $x=(2,2)$. Vertices of type 1 (resp.~2) are represented in black 
(resp.~white). At time $t$, $Z^{1,1}_t=2$, $Z^{1,2}_t=2$, $Z^{2,1}_t=3$ and $Z^{2,2}_t=4$.}\label{fig1}
\end{figure}

\noindent The next theorem asserts that from a given $d$-type forest with edge lengths $F$, issued from 
$x=(x_1,\dots,x_d)$, we can construct a $d$-dimensional additive compound Poisson process
${\rm X}=(\sum_{i=1}^dX_{t_i}^{i,j},\,j\in[d],\,(t_1,\dots,t_d)\in\mathbb{R}_+^d)$ stopped at its first passage time 
of $-x$, such that the branching process $Z$ associated to $F$ can be represented as a time change 
of ${\rm X}$. This extends the Lamperti representation to multitype branching processes.

\begin{thm}\label{2379} Let $x=(x_1,\dots,x_d)\in\mathbb{Z}_+^d$ and let $F$ be a $d$-type branching forest
with edge lengths, issued from $x$, with progeny distribution $\nu$ and life time rates $\lambda_i$. Then the processes 
$Z^{i,j}$, $i,j\in[d]$ introduced in Definition $\ref{2593}$ admit the following representation:
\begin{equation}\label{3521}
Z^{i,j}_t=\left\{\begin{array}{ll}
X^{i,j}_{\int_0^t Z^{(i)}_s\,ds}\,,\;\;\;t\ge0\,,\;\;\;\mbox{if $i\neq j$,}\\
x_i+X^{i,i}_{\int_0^t Z^{(i)}_s\,ds},\;\;\;t\ge0\,,\;\;\;\mbox{if $i=j$,}
\end{array}\right.
\end{equation}
where 
\[X^{(i)}=(X^{i,1},X^{i,2},\dots,X^{i,d})\,,\;\;\;i=1,\dots,d\,,\]
are independent $\mathbb{Z}_+^d$-valued compound Poisson processes with respective rates $\lambda_i$ and
jump distributions $\mu_i$ defined in $(\ref{7458})$, stopped at the first hitting time $T_x$ of $-x$ by the additive compound 
Poisson process, ${\rm X}=\left(\sum_{i=1}^dX_{t_i}^{i,j},\,j\in[d],\,(t_1,\dots,t_d)\in\mathbb{R}_+^d\right)$, i.e.
\[X^{(i)}_t{\bf 1}_{\{t<T_x^{(i)}\}}+(X_{T_x^{(i)}}^{i,1},\dots,X_{T_x^{(i)}}^{i,d}){\bf 1}_{\{t\ge T_x^{(i)}\}}\,.\]

In particular the multitype branching process $Z$, issued from $x=(x_1,\dots,x_d)\in\mathbb{R}_+^d$ admits the following representation, 
\begin{equation}\label{2415}
(Z^{(1)}_t,\dots,Z^{(d)}_t)=x+\left(\sum_{i=1}^dX^{i,1}_{\int_0^t Z^{(i)}_s\,ds},\dots,\sum_{i=1}^dX^{i,d}_{\int_0^t Z^{(i)}_s\,ds}\right)
\,,\;\;\;t\ge0\,.
\end{equation}
Moreover, the transformation $(\ref{3521})$ is invertible, so that the processes $Z^{i,j}$ can be recovered 
from the processes $X^{(i)}$. 
\end{thm}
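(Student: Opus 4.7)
The plan is to exploit the memorylessness of the exponential edge lengths, reducing the theorem to a random time-change statement for independent Poisson processes, with the stopping rule supplied by Theorem \ref{6803}. Conditionally on the past up to time $t$, the next reproduction event involving an individual of type $i$ occurs at instantaneous rate $\lambda_i Z^{(i)}_t$ and its offspring is drawn afresh from $\nu_i$, so that the net change in $(Z^{i,1},\dots,Z^{i,d})$ at such an event has distribution $\tilde\nu_i$. Setting $\tau_i(t):=\int_0^t Z^{(i)}_s\,ds$, the classical random time-change theorem for counting processes converts the marked point process of type-$i$ reproductions, of compensator $\lambda_i Z^{(i)}_t\,dt$, into a homogeneous rate-$\lambda_i$ marked Poisson process when reparametrized by $\tau_i^{-1}$; cumulating the marks (and discarding the trivial vectors) produces a compound Poisson process $X^{(i)}$ with jump law $\mu_i$, and one reads off identity (\ref{3521}) directly from the definition of $Z^{i,j}$.

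Independence of the processes $X^{(1)},\dots,X^{(d)}$ across types is obtained from the fact that for $i\neq j$ the reproduction events of type $i$ and type $j$ are driven by disjoint collections of independent exponential clocks; thus the underlying marked point processes on $\mathbb{R}_+$ are independent, and this independence is preserved by the type-wise time-changes. The stopping rule is then a geometric observation: $\tau_i(t)$ grows while $Z^{(i)}_t>0$ and freezes as soon as type $i$ is extinct, so the total elapsed clock $\tau_i(\infty)$ satisfies $x_j + \sum_i X^{i,j}_{\tau_i(\infty)} = Z^{(j)}(\infty) = 0$ for every coordinate $j$ with $Z^{(j)}(\infty)=0$, which is exactly the defining relation (\ref{3678}). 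Maximality in Theorem \ref{6803}, together with the fact that $\tau_i$ increases for as long as type-$i$ descendants are alive, identifies $(\tau_1(\infty),\dots,\tau_d(\infty))$ with $T_x$ almost surely. Invertibility of (\ref{3521}) then follows by solving the piecewise-constant integral system $\tau_i(t) = \int_0^t \bigl( x_i + \sum_j X^{j,i}_{\tau_j(s)}\bigr)\,ds$ inductively between consecutive jump times of the $X^{(j)}$.

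The principal difficulty, and the reason for the discretisation strategy announced in the introduction, lies in the fact that the $d$ clocks $\tau_1,\dots,\tau_d$ are not synchronized: the one-dimensional Lamperti argument handles all reproductions through a single clock $\int_0^t Z_s\,ds$, whereas here the $d$ type-clocks interact through the vector dynamics of $Z$ and one must verify that no information is lost or double-counted in the proposed bijection. My preferred implementation is therefore to first establish the statement for discrete $d$-type plane forests via the BFS coding of Section \ref{deterministic}, which yields a clean bijection between coding sequences and forests at the discrete level, and then to graft i.i.d.\ exponential edge lengths onto this bijection, transferring it into a bijection between trajectories of the stopped compound Poisson processes $(X^{(i)})_{i\in[d]}$ and branching forests with edge lengths. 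The properties of $T_x$ supplied by Theorem \ref{6803} ensure that the stopping times produced on the coding side match the extinction times on the forest side, closing the argument.
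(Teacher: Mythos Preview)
Your direct route via the random time-change theorem for counting processes is legitimate and genuinely different from the paper's proof. The paper proceeds entirely by discretisation: the forest $(F,c_F,\ell_F)$ is sliced at heights $n\delta$ to produce a discrete branching forest $D_\delta(F,c_F,\ell_F)$ (Lemma~\ref{4166}), the discrete Lamperti relation of Lemma~\ref{6209} and Theorem~\ref{5891} is applied to it, and $\delta\to 0$; the jump law $\mu_i$, the exponential inter-jump spacings, the independence of the $X^{(i)}$, and the identification $T_x^{(i)}=\lim_\delta \delta N^\delta_i=\int_0^\infty Z^{(i)}_s\,ds$ are all read off as limits of their discrete counterparts. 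Your third paragraph gestures toward discretisation but describes a different mechanism (grafting edge lengths onto the discrete bijection rather than time-slicing the continuous forest), and asserts a bijection that does not exist in continuous time: as the paper notes after Theorem~\ref{7462}, the map from forests with edge lengths to stopped compound Poisson paths is not one-to-one, because the identity of the reproducing individual is lost at each birth time. Only the map $(X^{(i)})\mapsto (Z^{i,j})$ is invertible, and that is Lemma~\ref{1592}. Your approach is shorter and exposes the Markovian mechanism directly; the paper's buys an elementary argument and an explicit bridge to the discrete BFS coding.

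Two points in your direct argument need tightening. First, the independence of $X^{(1)},\dots,X^{(d)}$: the marked point processes of type-$i$ and type-$j$ reproductions in real time are \emph{not} independent, since each depends on the whole trajectory of $Z$; what is true is that they have no common jumps and continuous compensators $\lambda_i\int_0^\cdot Z^{(i)}_s\,ds$, so the multivariate time-change theorem (Meyer) yields independent Poisson processes after the separate reparametrisations. The ``disjoint clocks'' heuristic is the right intuition but not a proof. Second, ``maximality'' in Theorem~\ref{6803} should read minimality, and that only gives $\tau(\infty)\ge T_x$; the reverse inequality needs its own argument (for instance via the iterative construction of Lemma~\ref{1377}). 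The paper's discretisation sidesteps both issues by inheriting independence and minimality directly from the discrete random walks and the discrete smallest solution $N^\delta$.
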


\noindent Conversely, the following theorem asserts that an additive compound Poisson process being given, 
we can construct a multitype branching forest whose branching process is the unique solution of equation
(\ref{2415}). 

\begin{thm}\label{7462} Let $x=(x_1,\dots,x_d)\in\mathbb{R}_+^d$ and
\[X^{(i)}=(X^{i,1},X^{i,2},\dots,X^{i,d})\,,\;\;\;i=1,\dots,d\,,\]
be independent $\mathbb{Z}_+^d$ valued compound Poisson processes with respective rates $\lambda_i>0$ and jump 
distributions $\mu_i$, stopped at the first hitting time $T_x$ of $-x$ by the additive compound Poisson process  $(t_1,\dots,t_d)\mapsto\left(\sum_{i=1}^dX_{t_i}^{i,j},\,j\in[d]\right)$. Then there is a branching forest with
edge lengths, with progeny distribution $\nu$ and rates $\lambda_i>0$
such that the processes $Z^{i,j}$ of Definition $\ref{2593}$ satisfy relation $(\ref{3521})$.
Moreover, the branching process $Z=(Z^{(1)},\dots,Z^{(d)})$ associated to this forest is the unique solution of 
the equation, 
\[(Z^{(1)}_t,\dots,Z^{(d)}_t)=x+\left(\sum_{i=1}^dX^{i,1}_{\int_0^t Z^{(i)}_s\,ds},\dots,
\sum_{i=1}^dX^{i,d}_{\int_0^t Z^{(i)}_s\,ds}\right)\,,\;\;\;t\ge0\,.\]
\end{thm}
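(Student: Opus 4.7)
The approach is to construct the forest $F$ deterministically from the paths of $(X^{(1)}, \ldots, X^{(d)})$, verify the distributional identification, and then settle uniqueness of the fixed-point equation. The key inputs are the path-level invertibility asserted at the end of Theorem \ref{2379} and the BFS-coding bijection proved in Section \ref{deterministic}.

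First I would construct $F$ pathwise. For each $i \in [d]$, list the jumps of $X^{(i)}$ in chronological order up to $T_x^{(i)}$; by adding $e_i$ to each jump vector, one obtains a sequence in $\mathbb{Z}_+^d$ distributed according to $\nu_i$ conditioned on not being equal to $e_i$. Feeding these $d$ sequences into the BFS coding bijection of Section \ref{deterministic} produces the discrete skeleton of a $d$-type plane forest with $x_i$ roots of type $i$. The successive inter-jump times of $X^{(i)}$, which are i.i.d.\ exponential with parameter $\lambda_i$, are then assigned in BFS order as the edge lengths attached to the type-$i$ vertices. By the invertibility statement at the end of Theorem \ref{2379}, the processes $Z^{i,j}$ derived from this $F$ via Definition \ref{2593} satisfy relation \eqref{3521} by construction.

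Second I would check that $F$ has the law of a $d$-type branching forest with progeny law $\nu$ and reproduction rates $\lambda_i$. The independence of $X^{(1)}, \dots, X^{(d)}$, together with the fact that the BFS bijection partitions the coding sequences along the vertices of each type, transfers to the independence of reproduction events across distinct individuals. The definition \eqref{7458} of $\mu_i$, combined with the i.i.d.\ $\mathrm{Exp}(\lambda_i)$ inter-jump structure, yields, for each type-$i$ individual, a lifetime exponential with parameter $\lambda_i$ and an offspring vector distributed according to $\nu_i$, once one absorbs the self-replacing events $Y = e_i$ into the lifetime via the standard identification of a compound Poisson law of rate $\lambda_i$ with jump law $\mu_i$ supported outside zero. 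Stopping at $T_x$ then ensures the forest is finite almost surely in the (sub)critical regime of Proposition \ref{1269}, and agrees pathwise with Theorem \ref{2379} in the supercritical case.

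Third I would prove uniqueness of the fixed-point equation. Between consecutive jumps, each $Z^{(i)}$ is constant, so the time changes $u_i(t) := \int_0^t Z^{(i)}_s\, ds$ are linear with known slopes. The next jump of $Z$ occurs at the smallest real time $t$ at which some $u_i(t)$ reaches the next jump time of $X^{(i)}$ on the Lamperti scale, and the jump of $Z^{(j)}$ is then $\Delta X^{i,j}_{u_i(t)}$ for each $j$. Proceeding causally from $t = 0$, this algorithm reconstructs $Z$ uniquely on each successive inter-jump interval. The main obstacle is the distributional verification in the second step: the rate parameter $\lambda_i$ combined with $\mu_i(\{0\}) = 0$ does not a priori equal the rate of offspring-producing events per type-$i$ individual in the forest, so a careful renormalization is needed; moreover, one must show that the BFS rearrangement does not spoil independence between families nor between types, and that the joint law of the reproduction events across individuals is exactly the product law required by the multitype branching forest. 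This is where the bijection of Section \ref{deterministic} carries the main weight.
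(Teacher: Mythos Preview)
Your uniqueness argument in step three is essentially the paper's Lemma~\ref{1592}, and that part is fine. The gap is in your first step: the forest you build via the BFS bijection will \emph{not} in general satisfy \eqref{3521} with the given $X$.

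The BFS coding of Section~\ref{deterministic} indexes the increments of $x^{(i)}$ by the breadth-first order of type-$i$ vertices in the discrete skeleton. In a continuous-time forest, however, the jumps of $X^{(i)}$ under the Lamperti time change correspond to the deaths of type-$i$ individuals in \emph{chronological} order. These two orderings are different as soon as edge lengths are nontrivial. Concretely, take $d=1$, $x=2$, and suppose $X^{1,1}$ has exactly two jumps, both of size $-1$, at times $1$ and $3$. Your recipe builds two roots with no offspring and assigns lifetimes $1$ and $2$ in BFS order. Then $Z^{(1)}_t=2$ on $[0,1)$, so at $t=0.6$ the Lamperti clock reads $1.2$ and $X^{1,1}_{1.2}=-1$, giving a right-hand side of $1$ in \eqref{3521}; but $Z^{1,1}_{0.6}=2$. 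So \eqref{3521} fails. The invertibility at the end of Theorem~\ref{2379} only lets you recover the processes $Z^{i,j}$ from $X$, not a forest; the paper states explicitly (just after Theorem~\ref{7462}) that the map from forests to $X$ is not injective and that the BFS bijection does not carry over.

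This is exactly why the paper's proof does \emph{not} use the BFS bijection at all. Instead it builds the forest directly in the real-time scale: it runs the Lamperti clocks forward, detects the first jump of some $X^{(i)}$, and then uses auxiliary independent uniform variables $(\theta_{k,n})$---external randomization, independent of $X$---to choose which of the currently alive type-$i$ individuals is the one that reproduces. Iterating jump-by-jump guarantees \eqref{3521} by construction, and the extra randomness is what supplies the parent identity that $X$ alone does not determine. You would need to introduce comparable auxiliary randomness; a purely deterministic construction from the paths of $X$ cannot work here.
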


\noindent We emphasize that Theorems \ref{2379} and \ref{7462} do not define a bijection between the set of  branching forests 
with edge lengths and this of additive compound Poisson processes, as in the discrete time
case, see Section \ref{deterministic}. Indeed, in the continuous time case, when constructing the processes
$Z^{i,j}$ as in  Definition \ref{2593}, at each birth time, we loose the information of the specific individual who
gives  birth. In particular, the forest which is constructed in Theorem \ref{7462} is not unique. 
This lost information is preserved in discrete time and the breadth first search coding that is defined 
in Subsection \ref{codingforests} allows us to define a bijection between the set of discrete forests and this of
coding sequences.

\section{Discrete multitype forests}\label{deterministic}

\subsection{The space of multitype forests}\label{space} We will denote by 
$\mathscr{F}$ the set of plane forests. More specifically, an element ${\bf f}\in\mathscr{F}$ is a directed planar 
graph with no loops on a possibly infinite and non empty set of vertices ${\bf v} = {\bf v} ({\bf f})$, with  a {\it finite} number of connected components, such that each vertex has a finite inner degree and
an outer degree equals to 0 or 1. The elements of $\mathscr{F}$ 
will simply be called forests.The connected components of a forest are called the {\it trees}. A forest consisting of a single connected component is also called a tree. 
In a tree ${\bf t}$, the only vertex with outer degree equal to 0 is called the {\it root} of ${\bf t}$. It will be denoted by 
$r({\bf t})$. The roots of the connected components of a forest ${\bf f}$ are called the roots of ${\bf f}$. For two vertices 
$u$ and $v$ of a forest ${\bf f}$, if $(u,v)$ is a directed edge of ${\bf f}$, then we say that $u$ is a {\it child} of $v$, or 
that $v$  is the {\it parent} of $u$. The set ${\bf v}({\bf f})$ of vertices of each forest ${\bf f}$ will be enumerated according to the 
usual breadth first search order, see Figure \ref{fig2}. We emphasize that we begin by enumerating the roots of the forest from 
the left to the right. In particular, our enumeration is not performed tree by tree. If a forest ${\bf f}$ contains at least $n$ 
vertices, then the $n$-th vertex of ${\bf f}$ is denoted by $u_n({\bf f})$. When no confusion is possible, we will simply
denote the $n$-th vertex by $u_n$.\\

A $d$-type forest is a couple $({\bf f},c_{\bf f})$, where ${\bf f}\in\mathscr{F}$ and $c_{\bf f}$ is an application 
$c_{\bf f}:{\bf v}({\bf f})\rightarrow [d]$. For $v\in{\bf v}({\bf f})$, the integer $c_{\bf f}(v)$ is called the {\it type} (or the {\it
color}) of $v$. The set of finite $d$-type forests will be denoted by $\mathscr{F}_d$. An element $({\bf f},c_{\bf f})\in
\mathscr{F}_d$ will often simply be denoted by ${\bf f}$.  We assume that for any  ${\bf f}\in\mathscr{F}_d$, if $u_i,u_{i+1},\dots,u_{i+j}\in{\bf v}({\bf f})$ have the same parent, then $c_{\bf f}(u_i)\le c_{\bf f}(u_{i+1})\le\dots\le c_{\bf f}(u_{i+j})$. Moreover, 
if $u_1,\dots,u_k$ are the roots of ${\bf f}$, then $c_{\bf f}(u_1)\le\dots\le c_{\bf f}(u_{k})$.  For each $i\in[d]$ we will denote by $u^{(i)}_n=u^{(i)}_n({\bf f})$ the $n$-th vertex of type $i$ of the forest ${\bf f}\in\mathscr{F}_d$, see Figure \ref{fig2}.\\

\begin{figure}[hbtp]

\vspace*{-.5cm}

\hspace*{-0.5cm}{\includegraphics[height=350pt,width=500pt]{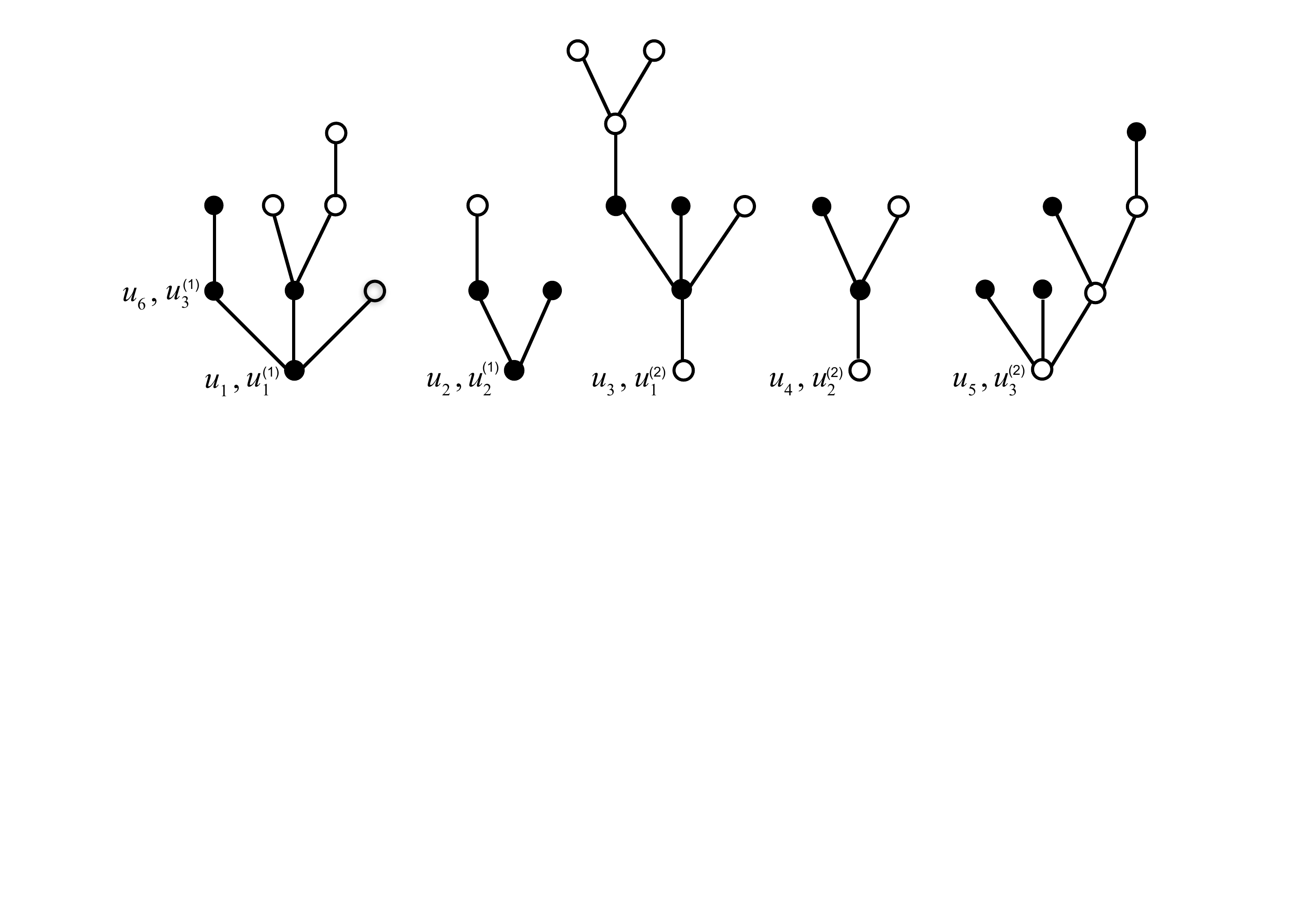}}
 
\vspace{-6cm}

\caption{A two type forest labeled according to the breath first search order. Vertices of type 1 (resp.~2) are represented in white (resp.~black).}\label{fig2}
\end{figure}

\subsection{Coding multitype forests}\label{codingforests} The aim of this subsection is to obtain a bijection between the 
set of multitype forests and some particular set of  integer valued sequences which has been introduced
in \cite{cl}. This bijection, which will be called a {\it coding}, depends on the breadth first search ordering defined at the previous
subsection. We emphasize that this coding is quite  different  from the one which is defined in \cite{cl}. 

\begin{definition}\label{9076}
Let $S_d$ be the set of $\left[\mathbb{Z}^{d}\right]^d$-valued sequences, $x=(x^{(1)},x^{(2)},\dots,x^{(d)})$, such that
for all $i\in [d]$, $x^{(i)}=(x^{i,1},\dots,x^{i,d})$ is a $\mathbb{Z}^{d}$-valued sequence defined on some
interval of integers, $\{0,1,2,\ldots, n_i\}$, $0\le n_i\le\infty$, which satisfies $x^{(i)}_0=0$ and if $n_i\ge1$, then
\begin{itemize}
\item[$(i)$]  for $i\neq j$, the sequence $(x_n^{i,j})_{0\le n\le n_i}$ is nondecreasing,
\item[$(ii)$] for all $i$, $x_{n+1}^{i,i}-x_n^{i,i}\ge -1$, $0\le n\le n_i-1$.
\end{itemize}
A sequence $x\in S_d$ will sometimes be denoted by $x=(x^{i,j}_k,\,0\le k\le n_i,\,i,j\in [d])$ and for more convenience, we will sometimes denote $x_k^{i,j}$ by $x^{i,j}(k)$. The vector
${\rm n}=(n_1,\dots,n_d)\in\overline{\mathbb{Z}}_+^d$, where $\overline{\mathbb{Z}}_+=\mathbb{Z}_+\cup\{+\infty\}$ will be called 
the length of $x$.
\end{definition}

\noindent Recall the definition of the order on $\mathbb{R}^d$, from the beginning of Section \ref{main} and 
let us set $U_{\rm s}=\{i\in[d]:s_i<\infty\}$, for any ${\rm s}\in\overline{\mathbb{Z}}_+^d$. Then the next lemma
extends Lemma 2.2 in \cite{cl} to the case where the smallest solution of a system such as $({\rm r},x)$ in
(\ref{4379}) may have infinite coordinates.

\begin{lemma}\label{1377} Let $x\in S_d$ whose length 
${\rm n}=(n_1,\dots,n_d)$ is such that $n_i=\infty$ for all $i$ $($i.e. $U_{\rm n}=\emptyset$$)$  and let ${\rm r}=(r_1,\dots,r_d)\in\mathbb{Z}_+^d$. Then there exists 
${\rm s}=(s_1,\dots,s_d)\in\overline{\mathbb{Z}}_+^d$ such that
\begin{equation}\label{4379}
({\rm r},x)\qquad r_j+\sum_{i=1}^dx^{i,j}(s_i)=0\,,\;\;\;j\in U_{\rm s}\,,
\end{equation}
$($we will say that ${\rm s}$ is a solution of the system $(${\rm r},x$))$  and such that any other solution ${\rm q}$ of $({\rm r},x)$ 
satisfies ${\rm q}\ge {\rm s}$.  Moreover we have $s_i=\min\{q:x^{i,i}_q=\min_{0\le k\le s_i}x^{i,i}_k\}$, for all $i\in U_{\rm s}$. 

The solution ${\rm s}$ will be called {\em the smallest solution} of the system $({\rm r},x)$. We emphasize that in $(\ref{4379})$, we may have $U_{\rm s}=\emptyset$, so that according to this definition, the smallest solution of the system $({\rm r},x)$ may be infinite, that is $s_i=\infty$ for all $i\in[d]$. Note also that in $(\ref{4379})$ it is implicit that
$\sum_{i\in [d]\setminus U_{\rm s}}x^{i,j}(s_i)<\infty$, for all $j\in U_{\rm s}$.
\end{lemma}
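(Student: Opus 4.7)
The plan is to construct the smallest solution ${\rm s}$ by a monotone iteration on $\overline{\mathbb{Z}}_+^d$. The downward skip free property of each diagonal $x^{i,i}$ (condition $(ii)$) makes the first-passage times
\[
T_i(v):=\inf\{q\geq 0:x^{i,i}(q)=-v\}\in\overline{\mathbb{Z}}_+,\qquad v\in\overline{\mathbb{Z}}_+,
\]
well defined (with $\inf\varnothing=+\infty$), and strictly increasing on the set where they are finite. Monotonicity of each off-diagonal $x^{j,i}$, $j\neq i$ (condition $(i)$), makes $x^{j,i}(\infty):=\lim_{q\to\infty}x^{j,i}(q)\in\overline{\mathbb{Z}}_+$ well defined. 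Starting from ${\rm s}^{(0)}:=(0,\ldots,0)$, I iterate
\[
s_i^{(k+1)}:=T_i\!\left(r_i+\sum_{j\neq i}x^{j,i}(s_j^{(k)})\right),\qquad k\geq 0,\;i\in[d].
\]

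An easy induction shows that $({\rm s}^{(k)})_{k\geq 0}$ is coordinate-wise nondecreasing: if $s_j^{(k)}\geq s_j^{(k-1)}$ for all $j$, then off-diagonal monotonicity together with monotonicity of $T_i(\cdot)$ gives $s_i^{(k+1)}\geq s_i^{(k)}$. Set ${\rm s}:=\lim_k{\rm s}^{(k)}\in\overline{\mathbb{Z}}_+^d$. To check that ${\rm s}$ satisfies $({\rm r},x)$, fix $i\in U_{\rm s}$, so that $s_i<\infty$. Since $(s_i^{(k)})$ is a nondecreasing integer sequence with finite limit $s_i$, it stabilizes at $s_i$ from some index on; writing $v_k:=r_i+\sum_{j\neq i}x^{j,i}(s_j^{(k)})$, this means $s_i=T_i(v_k)$ for $k$ large. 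Because $T_i(+\infty)=+\infty$ and $T_i$ is strictly increasing on its finite range, while $T_i(v_k)=s_i<\infty$, the monotone limit $v_\infty:=r_i+\sum_{j\neq i}x^{j,i}(s_j)$ cannot be infinite (this is precisely the implicit tail-finiteness condition in the statement): it is a finite integer. Hence $v_k=v_\infty$ eventually and $s_i=T_i(v_\infty)$, which rewrites as $r_i+\sum_{j=1}^d x^{j,i}(s_j)=0$.

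For minimality, given any solution ${\rm q}$ of $({\rm r},x)$, I show $s_i^{(k)}\leq q_i$ for all $k,i$ by induction. If $q_i=\infty$ there is nothing to check; otherwise $i\in U_{\rm q}$, and the equation at column $i$ combined with off-diagonal monotonicity and the inductive hypothesis gives
\[
x^{i,i}(q_i)=-r_i-\sum_{j\neq i}x^{j,i}(q_j)\leq -v_k.
\]
Since $x^{i,i}$ is downward skip free and starts at $0$, it must reach level $-v_k$ no later than $q_i$, so $T_i(v_k)\leq q_i$. Letting $k\to\infty$ yields ${\rm s}\leq{\rm q}$. Finally, for $i\in U_{\rm s}$, the characterization $s_i=\min\{q:x^{i,i}_q=\min_{0\leq k\leq s_i}x^{i,i}_k\}$ is immediate from the construction: $s_i=T_i(v_\infty)$ is the first time $x^{i,i}$ attains the value $-v_\infty$, and downward skip freeness forces $x^{i,i}(k)>-v_\infty$ for every $k<s_i$, so $s_i$ is also the first time $x^{i,i}$ attains its minimum over $[0,s_i]$.

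The main obstacle, relative to the all-finite-coordinates case of Lemma~2.2 in \cite{cl}, is the bookkeeping around infinite coordinates: one must extract from the finiteness of $s_i$ (for $i\in U_{\rm s}$) the implicit tail condition $\sum_{j\notin U_{\rm s}} x^{j,i}(\infty)<\infty$ that makes the equation at column $i$ meaningful. Once this monotone-convergence step is cleanly in place, the remainder of the proof is a routine interplay between monotonicity of $T_i$ and of the off-diagonals on one hand, and downward skip freeness of the diagonals on the other.
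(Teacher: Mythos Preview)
Your proof is correct and follows essentially the same approach as the paper: both construct the smallest solution via the monotone iteration $s_i^{(k+1)}=T_i\bigl(r_i+\sum_{j\neq i}x^{j,i}(s_j^{(k)})\bigr)$ (the paper writes this as $k_j^{(n)}=\inf\{k:x^{j,j}_k=-v_j^{(n)}\}$ with $v_j^{(n+1)}=r_j+\sum_{i\neq j}x^{i,j}(k_i^{(n)})$), prove that the sequence is nondecreasing by induction using off-diagonal monotonicity, and establish minimality by the same inductive comparison with an arbitrary solution ${\rm q}$. Your treatment of the tail-finiteness condition for $j\notin U_{\rm s}$ is in fact more explicit than the paper's, which simply passes to the limit without isolating this step.
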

\begin{proof} This proof is based on the simple observation that for fixed $j$, when at least one of the indices
 $k_i$'s for $i\neq j$ increases, the term $\sum_{i\neq j}x^{i,j}(k_i)$ may only increase and when $k_j$ increases, the term $x^{j,j}(k_j)$ may decrease only by jumps of amplitude $-1$.\\

First recall the notation $U_{\rm s}$, for ${\rm s}\in\overline{\mathbb{Z}}_+^d$ introduced before Lemma \ref{1377} and set $v_j^{(1)}=r_j$ and for $n\ge1$,
\[k_j^{(n)}=\inf\{k:x^{j,j}_k=-v_j^{(n)}\}\;\;\mbox{and}\;\;v_{j}^{(n+1)}=r_j+\sum_{i\neq j}x^{i,j}(k^{(n)}_i)\,,\]
where $\inf\emptyset=\infty$.  Set also ${\rm k}^{(0)}=0$ and $U_{{\rm k}^{(0)}}=[d]$. Then note that since for
$i\neq j$, the $x^{i,j}$'s are positive and increasing, we have 
\[{\rm k}^{(n)}\le {\rm k}^{(n+1)}\;\;\mbox{and}\;\;U_{{\rm k}^{(n+1)}}\subseteq U_{{\rm k}^{(n)}}\,,\;\;\;n\ge0\,.\]
Moreover, for each $n\ge0$, 
\begin{equation}\label{7399}
r_j+\sum_{i\neq j}x^{i,j}(k_i^{(n)})+x^{j,j}(k_j^{(n)})\ge0\,,\;\;\;j\in U_{{\rm k}^{(n)}}\,.
\end{equation}
Define 
\[n_0=\inf\big\{n\ge1:r_j+\sum_{i\neq j}x^{i,j}(k_i^{(n)})+x^{j,j}(k_j^{(n)})=0\,,\;j\in U_{{\rm k}^{(n)}}\big\}\,,\]
where in this definition, we consider that $r_j+\sum_{i\neq j}x^{i,j}(k_i^{(n)})+x^{j,j}(k_j^{(n)})=0$ is satisfied for all $j\in U_{{\rm k}^{(n)}}$ if $U_{{\rm k}^{(n)}}=\emptyset$. Note that ${\rm k}^{(n)}={\rm k}^{(n_0)}$ and $U_{{\rm k}^{(n)}}=U_{{\rm k}^{(n_0)}}$,
for all $n\ge n_0$. The index $n_0$ can be infinite and in general,
we have ${\rm k}^{(n_0)}=\lim_{n\rightarrow\infty}{\rm k}^{(n)}$.
Then the smallest solution of the system  $({\rm r},x)$ in the sense which is defined in Lemma \ref{1377} is ${\rm k}^{(n_0)}$.\\

Indeed, (\ref{4379}) is clearly satisfied for ${\rm s}={\rm k}^{(n_0)}$. Then let ${\rm q}\in\overline{\mathbb{Z}}_+^d$ satisfying (\ref{4379}), that is
\begin{equation}\label{5389}
r_j+\sum_{i\neq j}x^{i,j}(q_i)+x^{j,j}(q_j)=0\,,\;j\in U_{{\rm q}}\,.
\end{equation}
We can prove by induction that ${\rm q}\ge {\rm k}^{(n)}$, for all $n\ge1$. Firstly for (\ref{5389}) to be satisfied, we should have 
$q_j\ge \inf\{k:x^{j,j}(k)=-r_j\}$, for all $j\in U_{{\rm q}}$,
hence ${\rm q}\ge {\rm k}^{(1)}$. Now assume that ${\rm q}\ge {\rm k}^{(n)}$. Then $U_{{\rm q}}\subseteq U_{{\rm k}^{(n)}}$ and from (\ref{7399}) and (\ref{5389}) for each $j\in U_{{\rm q}}$, $q_j\ge \inf\{k:x^{j,j}(k)=-(r_j+\sum_{i\neq j}x^{i,j}(k_i^{(n)}))\}$, hence ${\rm q}\ge {\rm k}^{(n+1)}$.\\

Finally the fact that $s_i=\min\{q:x^{i,i}_q=\min_{0\le k\le s_i}x^{i,i}_k\}$, for all $i\in U_{\rm s}$ readily follows from the above construction of $s_i$.~\qed
\end{proof}

Let $({\bf f},c_{\bf f})\in\mathscr{F}_d$, $u\in{\bf v}({\bf f})$ and denote by $p_i(u)$ the number of children of type $i$ of 
$u$. For each $i\in [d]$, let $n_i\ge0$ be the number of vertices of type $i$ in ${\bf v}({\bf f})$. Then we define the
application $\Psi$ from $\mathscr{F}_d$ to $S_d$ by
\begin{eqnarray}
\Psi(({\bf f},c_{\bf f}))=x\,,\label{7288}
\end{eqnarray}
where $x=(x^{(1)},\dots,x^{(d)})$ and for all $i\in[d]$, $x^{(i)}$ is the $d$-dimensional chain $x^{(i)}=(x^{i,1},\dots,x^{i,d})$, 
with length $n_i$, whose values belong to the set $\mathbb{Z}^{d}$, such that $x_0^{(i)}=0$ and if $n_i\ge1$, then
\begin{equation}\label{6245}
x_{n+1}^{i,j}-x_n^{i,j}=p_j(u_{n+1}^{(i)})\,,\;\;\mbox{if $i\neq j\;$ and}\;\;\;x_{n+1}^{i,i}-x_n^{i,i}=
p_i(u_{n+1}^{(i)})-1\,,\;\;\;0\le n\le n_i-1\,.
\end{equation}
We recall that $u^{(i)}_n$ is the $n$-th vertex of type $i$ in the breadth first search order of ${\bf f}$.
We will see in Theorem \ref{5678} that $(n_1,\dots,n_d)$ is actually the smallest solution of the system $({\rm r},x)$, 
where $r_i$ is the number of roots of type $i$ of the forest ${\bf f}$. This leads us to the following definition.

\begin{definition}\label{9076} Fix ${\rm r}=(r_1,\dots,r_d)\in\mathbb{Z}_+^d$, such that  ${\rm r}>0$.
\begin{itemize}
\item[$(i)$] We denote by  $\Sigma_d^{\rm r}$ the subset of $S_d$ of sequences $x$ with length ${\rm n}=(n_1,\dots,n_d)\in\overline{\mathbb{Z}}_+^d$ such that  ${\rm n}$ is the smallest solution of the system  $({\rm r},\bar{x})$, where for all $i\in[d]$, $\bar{x}^{(i)}_k=x^{(i)}_k$, if $k\le k_i$ and $\bar{x}^{(i)}_k=x^{(i)}_{k_i}$, if $k\ge k_i$. We will also say that ${\rm n}$ is 
the smallest solution of the
system  $({\rm r},x)$.
\item[$(ii)$] We denote by $\mathscr{F}_d^{\rm r}$, the subset of $\mathscr{F}_d$ of $d$-type forests containing exactly  
$r_i$ roots of type $i$, for all $i\in[d]$.
\end{itemize}
\end{definition}

\noindent The following theorem gives a one to one correspondence between the sets $\mathscr{F}_d^{\rm r}$ and $\Sigma_d^{\rm r}$.

\begin{thm}\label{5678} Let  ${\rm r}=(r_1,\dots,r_d)\in\mathbb{Z}_+^d$, be such that  ${\rm r}>0$. Then for all 
$({\bf f},c_{\bf f})\in\mathscr{F}_d^{\rm r}$, the chain $x=\Psi({\bf f},c_{\bf f})$ belongs to the set $\Sigma_d^{\rm r}$. 
Moreover, the mapping
\begin{eqnarray*}
\Psi:\mathscr{F}_d^{\rm r}&\rightarrow&\Sigma_d^{\rm r}\\
({\bf f},c_{\bf f})&\mapsto&\Psi({\bf f},c_{\bf f})
\end{eqnarray*}
is a bijection.
\end{thm}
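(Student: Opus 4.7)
The plan is to check that $\Psi({\bf f},c_{\bf f})$ lies in $\Sigma_d^{\rm r}$ and to exhibit an explicit inverse $\Phi\colon\Sigma_d^{\rm r}\to\mathscr{F}_d^{\rm r}$ built by a breadth-first reconstruction; the bijectivity of $\Psi$ then follows.

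That $\Psi({\bf f},c_{\bf f})\in S_d$ is immediate from \eqref{6245}: for $i\neq j$ the increments $p_j(u_{n+1}^{(i)})$ are non-negative, and $p_i(u_{n+1}^{(i)})-1\geq -1$. Next, the length ${\rm n}=(n_1,\ldots,n_d)$ satisfies the system $({\rm r},\Psi({\bf f},c_{\bf f}))$ by the bookkeeping identity
\[
r_j+\sum_{i=1}^d x^{i,j}(n_i)\;=\;r_j+\bigl(\text{total number of type-}j\text{ children over all vertices}\bigr)-n_j\;=\;r_j+(n_j-r_j)-n_j=0,
\]
since every non-root type-$j$ vertex has exactly one parent. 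The main obstacle is showing that ${\rm n}$ is the \emph{smallest} solution. I would prove this by induction on $L\geq 0$: any solution ${\rm k}\leq{\rm n}$ has the property that the set $V_{\rm k}:=\bigcup_j\{\text{first }k_j\text{ type-}j\text{ vertices in the BFS order}\}$ contains every vertex at level $\leq L$ of ${\bf f}$. For $L=0$, the solution equation $k_j=r_j+\#\{\text{type-}j\text{ children of }V_{\rm k}\}$ gives $k_j\geq r_j$, so $V_{\rm k}$ contains the $r_j$ type-$j$ roots for each $j$. For the inductive step, if $V_{\rm k}$ contains all vertices at level $\leq L$, then the type-$j$ children of $V_{\rm k}$ include all type-$j$ vertices at levels $1,\ldots,L{+}1$ (each such vertex is a child of a vertex at level $\leq L$), so $k_j\geq\#\{\text{type-}j\text{ vertices at levels}\leq L{+}1\}$; since the BFS sub-order on type-$j$ respects the level filtration, $V_{\rm k}$ then contains every type-$j$ vertex at level $\leq L{+}1$. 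Finiteness of ${\bf f}$ forces $V_{\rm k}={\bf v}({\bf f})$ and hence ${\rm k}={\rm n}$, proving that ${\rm n}$ is the smallest solution and $\Psi({\bf f},c_{\bf f})\in\Sigma_d^{\rm r}$.

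For the inverse, given $x\in\Sigma_d^{\rm r}$ with length ${\rm n}$, I would place $r_j$ roots of type $j$ (in increasing order of $j$), set $\tilde k_j=0$, and enqueue the roots in BFS order. Iteratively, dequeue the front vertex $v$ of type $c$, set $\tilde k_c:=\tilde k_c+1$, and attach to $v$ a total of $x^{c,j}(\tilde k_c)-x^{c,j}(\tilde k_c-1)$ children of type $j\neq c$ and $x^{c,c}(\tilde k_c)-x^{c,c}(\tilde k_c-1)+1$ children of type $c$, sorted by type, appending them to the queue. This procedure never reads $x^{(c)}$ beyond position $n_c$: if ever $\tilde k_c=n_c$ at an intermediate state $\tilde{\rm k}\leq{\rm n}$, subtracting the solution equation at ${\rm n}$ from the expression for the pending count $r_c+\sum_i x^{i,c}(\tilde k_i)$ yields
\[
\text{pending}_c(\tilde{\rm k})=\sum_{i\neq c}\bigl(x^{i,c}(\tilde k_i)-x^{i,c}(n_i)\bigr)\leq 0,
\]
since each $x^{i,c}$ with $i\neq c$ is non-decreasing and $\tilde k_i\leq n_i$; being a non-negative count, $\text{pending}_c=0$ and no type-$c$ vertex remains in the queue. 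Hence the BFS ends at a solution $\tilde{\rm k}\leq{\rm n}$, which equals ${\rm n}$ by the minimality built into the definition of $\Sigma_d^{\rm r}$. The identities $\Psi\circ\Phi=\mathrm{id}$ and $\Phi\circ\Psi=\mathrm{id}$ then follow directly, since both compositions read off, respectively reproduce, the same children counts in the same BFS order with the prescribed sibling type-sorting.
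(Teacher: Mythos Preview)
Your argument is correct and takes a somewhat different route from the paper's. For the key step---showing that the length ${\rm n}$ of $\Psi({\bf f},c_{\bf f})$ is the \emph{smallest} solution of $({\rm r},x)$---the paper introduces maximal monochromatic subtrees, relates $x^{i,i}$ to the Lukasiewicz path of the type-$i$ subforest, and then traces the iterative sequence $k_j^{(n)}$ constructed in the proof of Lemma~\ref{1377} through the forest to show it exhausts the vertices. Your level-by-level induction is more direct: it uses only the reformulated solution equation $k_j=r_j+\#\{\text{type-}j\text{ children of }V_{\rm k}\}$ together with the fact that breadth-first order respects the generation filtration, and it does not depend on how Lemma~\ref{1377} is proved. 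For the inverse, both you and the paper rebuild the forest in BFS (generation-by-generation) order; your pending-count argument showing $\tilde{\rm k}\le{\rm n}$ throughout is in fact more explicit than the paper's corresponding passage.

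One caveat: despite a stray word in the paper, forests in $\mathscr{F}_d^{\rm r}$ may be infinite (Definition~\ref{9076} allows $n_i=\infty$, and the paper's own proof treats the case $s_i=\infty$), so your phrase ``finiteness of ${\bf f}$ forces $V_{\rm k}={\bf v}({\bf f})$'' is not quite right. The repair is immediate: your induction already gives $V_{\rm k}\supseteq\{\text{vertices at level}\le L\}$ for every $L$, hence $V_{\rm k}={\bf v}({\bf f})$ without any finiteness assumption, and the solution equation is only required for indices $j\in U_{\rm k}$. Likewise, in your inverse construction the BFS may run indefinitely when some $n_c=\infty$; your bound $\tilde{\rm k}\le{\rm n}$ still holds at every stage, and the limiting counts $m_j=\lim\tilde k_j\le n_j$ again satisfy the system $({\rm r},x)$ (by the same bookkeeping applied to the reconstructed forest), so minimality of ${\rm n}$ forces ${\rm m}={\rm n}$.
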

\begin{proof} In this proof, in order to simply the notation, we will identify the sequence $x$ with its extension $\bar{x}$ introduced 
in Definition \ref{9076}.

Let $({\bf f},c_{\bf f})$ be a forest of $\mathscr{F}_d^{\rm r}$ and let ${\rm s}=(s_1,\dots,s_d)\in\overline{\mathbb{Z}}_+^d$, where $s_i$ is the number of vertices of type $i$ in ${\bf f}$. We define a subtree of type $i\in[d]$ of $({\bf f},c_{\bf f})$ as a maximal connected subgraph of $({\bf f},c_{\bf f})$ whose all vertices are of type $i$. Formally, ${\bf t}$ is a subtree of type $i$ of $({\bf f},c_{\bf f})$, if it is a connected subgraph whose all vertices are of type $i$ and such that either $r({\bf t})$ has no parent or the type of its parent is different from $i$. Moreover, if the parent of a
vertex $v\in {\bf v}({\bf t})^c$ belongs to ${\bf v}({\bf t})$, then $c_{\bf f}(v)\neq i$. 

Let $i\in[d]$ and assume first that $s_i<\infty$ (i.e. $i\in U_{\rm s}$) and let $k_i\le s_i$ be the
number of subtrees of type $i$ in ${\bf f}$. Then we can check that the length $s_i$ of the sequence
$x^{i,i}$ corresponds to its first hitting time of level $-k_i$, i.e.
\begin{equation}\label{4191}
s_i=\inf\{n:x^{i,i}_n=-k_i\}\,.
\end{equation}
Indeed, let us rank the subtrees of type $i$ in ${\bf f}$ according to the breadth first search of their roots, 
so that we obtain the subforest of type $i$: 
${\bf t}_1,\dots,{\bf t}_{k_i}$ and let $x'$ be the Lukasiewicz-Harris coding path of this subforest (see, \cite{le}
or \cite{cl} for a definition of the Lukasiewicz-Harris coding path). Then we readily check that both sequences
end up at the same level, i.e.
\[\inf\{n:x'_n=-k_i\}=\inf\{n:x^{i,i}_n=-k_i\}\,.\]
Note that if $s_i=\infty$, then relation (\ref{4191}), whether or not $k_i$ is finite. 

Now let us check that ${\rm s}$ is a solution of the system $({\rm r},x)$,  that is 
\begin{equation}\label{5153}
r_j+\sum_{i=1}^dx^{i,j}(s_i)=0\,,\;\;\;j\in U_{\rm s}\,.
\end{equation} 
Let $j\in U_{\rm s}$, then $r_j+\sum_{i\neq j}x^{i,j}(s_i)$ clearly represents the total number of vertices of type
$j$ in ${\bf v}({\bf f})$ which are either a root of type $j$ or whose parent is of a type different from $j$, i.e. it represents the total number of subtrees of type $j$ in ${\bf f}$.  On the other hand, from (\ref{4191}), $-x^{j,j}(s_j)\ge0$ is the number of these subtrees. We conclude that equation (\ref{5153}) is satisfied.

It remains to check that ${\rm s}$ is the smallest solution of the system $({\rm r},x)$. As in  Lemma \ref{1377}, set ${\rm k}^{(0)}=0$ and for all $j\in[d]$,
\begin{equation}\label{7337}
k_j^{(n)}=\inf\big\{k:x^{j,j}_k=-(r_j+\sum_{i\neq j}x^{i,j}(k^{(n-1)}_i))\big\}\,,\;\;n\ge1\,.
\end{equation}
Then from the proof of  Lemma \ref{1377}, we have to prove that 
${\rm s}=\lim_{\rightarrow\infty}{\rm k}^{(n)}$.
Recall the coding which is defined in (\ref{6245}). For all $j\in[d]$, once we have visited the $r_j$ first vertices of type $j$ which are actually roots of the forest, we have to visit the whole corresponding subtrees, so that, if the total number of vertices of type $j$ in $({\bf f},c_{\bf f})$ is finite, i.e. $j\in U_{\rm s}$, then the chain $x^{j,j}$ first hits $-r_j$ at time $k_j^{(1)}<\infty$.  Then at time $k_j^{(1)}$, an amount of $\sum_{i\neq j}x^{i,j}(k^{(1)}_i)$ more subtrees of type $j$ have to be visited. So the chain $x^{j,j}$ has to hit $-(r_j+\sum_{i\neq j}x^{i,j}(k^{(1)}_i))$ at time $k^{(2)}<\infty$. This procedure is iterated until the last vertex of type $j$ is visited, that is until time $s_j=\lim_{n\rightarrow\infty}k_j^{(n)}<\infty$ (note that the sequence $k_j^{(n)}$ is constant after some finite index). Besides, from (\ref{7337}), we have
 \[s_j=\inf\{k:x^{j,j}_k=-(r_j+\sum_{i\neq j}x^{i,j}(s_i))\}\,,\;\;n\ge1\,,\;\;j\in
U_{\rm s}\,.\]
On the other hand, if the total number of vertices of type $j$ in $({\bf f},c_{\bf f})$ is infinite, then $k^{(n)}_j$ tends to $\infty$ 
(it can be infinite by some rank). So that we also have $s_j=\lim_{n\rightarrow\infty} k_j^{(n)}$ in this case. Therefore, ${\rm s}$ 
is the smallest solution of $({\rm r},x)$. 

Conversely let $x\in \Sigma_d^{\rm r}$ with length ${\rm s}$, then we construct a forest  
$({\bf f},c_{\bf f})\in\mathscr{F}_d^{\rm r}$, generation by generation, as follows. At generation $n=1$, 
for each $i\in[d]$, we take $r_i$ vertices of type $i$. We rank these $r_1+\dots+r_d$ vertices as it is defined 
in Subsection \ref{space}. Then to the $k$-th vertex of type $i$, we give $\Delta x_k^{i,j}:=x_k^{i,j}-x_{k-1}^{i,j}$ children of type $j\in[d]$ if $j\neq i$ and $\Delta x_k^{i,i}+1$ children of type $i$. We rank 
vertices of generation $n=2$ and to the $r_i+k$-th vertex of type $i$,  
we give $\Delta x_{r_i+k}^{i,j}$ children of type $j\in[d]$, if $j\neq i$ and $\Delta x_{r_i+k}^{i,i}+1$ children 
of type $i$, and so on. Proceeding this way, until the steps $\Delta x_{s_i}^{i,j}$, $i,j\in[d]$,
we have constructed a forest. Indeed the
total number of children of type $j$ whose parent is of type $i\neq j$ is $x^{i,j}(s_i)$, hence, the 
total number of children of type $j$ which is a root or whose parent is different from $j$ is 
$r_j+\sum_{i\neq j}x^{i,j}(s_i)$. 
Moreover,  each branch necessarily ends up with a leaf, since $\Delta x_{s_i+1}^{i,j}=0$, for all $i\neq j$ 
and $\Delta x_{s_i+1}^{i,i}=-1$.  Therefore we have constructed forest of $\mathscr{F}_d^{\rm r}$.

Finally, the  mapping which is described in $(\ref{6245})$ is clearly invertible, so we have proved that $\Psi$ is a bijection from $\mathscr{F}_d^{\rm r}$ to $\Sigma_d^{\rm r}$. 
$\;\;\Box$
\end{proof}

\subsection{Representing the sequence of generation sizes}\label{6737}
To each ${\bf f}\in \mathscr{F}_d^{\rm r}$ we associate the chain $z=(z^{(1)},\dots,z^{(d)})$ indexed by the
successive generations in ${\bf f}$, where for each $i\in[d]$, and $n\ge1$, $z^{(i)}(n)$ is the size of the population of type 
$i$ at generation $n$. More formally, we say that the (index of the) generation of $u\in{\bf v}({\bf f})$ is $n$ if 
$d(r({\bf t}_u),u)=n$, where ${\bf t}_u$ is the tree of ${\bf f}$ which contains $u$, $r({\bf t}_u)$ is the root of this tree and $d$ 
is the usual distance in discrete trees. Let us denote by $h({\bf f})$ the index of the highest generation in ${\bf f}$. 
Then $z^{(i)}$ is defined by 
\begin{equation}\label{8477}
z^{(i)}(n)=\left\{\begin{array}{ll}
\mbox{Card}\{u\in{\bf v}({\bf f}):c_{\bf f}(u)=i,\,d(r({\bf t}_u),u)=n\}&\mbox{if $n\le h({\bf f})$,}\\
0&\mbox{if $n\ge h({\bf f})+1$.}\\
\end{array}
\right.
\end{equation}
Let ${\bf v}^*({\bf f})$ be the subset of ${\bf v}({\bf f})$ of vertices which are not roots of ${\bf f}$. We denote by $u^*$ the parent 
of any $u\in{\bf v}^*({\bf f})$. We also define the chains $z^{i,j}$, for 
$i,j\in[d]$, as follows: $z^{i,j}(0)=0$ if $i\neq j$, $z^{i,i}(0)=z_i(0)=r_i$, for all $i,j\in[d]$, $z^{i,j}(n)=z^{i,j}(h({\bf f}))$ if $n\ge 
h({\bf f})$,  and for $n\ge1$,
\begin{equation}\label{2472}
z^{i,j}(n)=\left\{\begin{array}{l}
\mbox{Card}\{u\in{\bf v}^*({\bf f}):c_{\bf f}(u)=j,\,c_{\bf f}(u^*)=i,\,d(r({\bf t}_u),u)\le n\}\,,\;\;\mbox{if $i\neq j$,}\\
r_i+\mbox{Card}\{u\in{\bf v}^*({\bf f}):c_{\bf f}(u)=i,\,c_{\bf f}(u^*)=i,\,p_i(u^*)\ge2,\,d(r({\bf t}_u),u)\le n\}-\\
\mbox{Card}\{u\in{\bf v}({\bf f}):c_{\bf f}(u)=i,\,p_i(u)=0,\,d(r({\bf t}_u),u)\le n-1\}\,,\;\;\mbox{if $i= j$.}
\end{array}\right.
\end{equation}
In words, if $i\neq j$ then $z^{i,j}(n)$ is the total number of vertices of type $j$ whose parent is of type $i$ in the $n$
first generations of the forest ${\bf f}$. If $i=j$ then we only count the number of vertices of type $i$ with at least one 
brother of type $i$ and whose parent is of type $i$ in the $n$ first generations. To this number, we subtract the 
number of vertices of type $i$  with no children of type $i$, whose generation is less  or equal than $n-1$. Then it is 
not difficult to check the  following relation:
\begin{equation}\label{2671}
z^{(i)}(n)=\sum_{i=1}^dz^{i,j}(n)\,,n\ge0\,,\;\;\;i\in[d]\,.
\end{equation}

\noindent We end this subsection by a lemma which provides a relationship between the chains $x$ and $z$ and $z^{i,j}$. 
This result is the deterministic expression of the Lamperti representation of Theorem  \ref{5891} below and its continuous 
time counterpart in Theorems \ref{2379} and \ref{7462}.
\begin{lemma}\label{6209} The chain $z^{i,j}$ may be obtained as the following time change of the chain $x^{i,j}$:
\begin{eqnarray}
z^{i,j}(n)&=&x^{i,j}(\mbox{$\sum_{k=0}^{n-1}z^{(i)}(k)$})\,,\;\;\;n\ge1\,,\;\;\;i,j\in[d]\,,\;\;i\neq j\,,\label{8377}\\
z^{i,i}(n)&=&r_i+x^{i,i}(\mbox{$\sum_{k=0}^{n-1}z^{(i)}(k)$})\,,\;\;\;n\ge1\,,\;\;\;i\in[d]\,.\label{8517}
\end{eqnarray}
In particular, we have 
\begin{equation}\label{9537}
z^{(j)}(n)=r_j+\sum_{i=1}^dx^{i,j}(\mbox{$\sum_{k=0}^{n-1}z^{(i)}(k)$})\,,\;\;\;n\ge1\,,\;\;\;j\in[d]\,.
\end{equation}
Moreover, given $x^{i,j}$, $i,j\in[d]$, the chains $z^{i,j}$, $i,j\in[d]$ are 
uniquely determined by equations $(\ref{2671})$, $(\ref{8377})$ and
$(\ref{8517})$. 
\end{lemma}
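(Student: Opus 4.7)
The plan is to identify the combinatorial content of the right-hand sides of (\ref{8377}) and (\ref{8517}). The crucial observation is that the breadth first search order on a plane forest visits every vertex of generation $k$ before any vertex of generation $k+1$. Consequently, for each type $i\in[d]$ and each $n\geq1$, the first $m_i(n):=\sum_{k=0}^{n-1}z^{(i)}(k)$ elements of the sub-enumeration $(u_k^{(i)})_{k\ge 1}$ of type-$i$ vertices are exactly those type-$i$ vertices lying in generations $0,1,\dots,n-1$ of ${\bf f}$.

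For $i\neq j$, formula (\ref{6245}) immediately gives
\[
x^{i,j}(m_i(n))=\sum_{k=1}^{m_i(n)}p_j(u_k^{(i)}),
\]
i.e.\ the total number of type-$j$ children of the type-$i$ vertices in generations $0,\dots,n-1$. These children are precisely the type-$j$ vertices of generations $1,\dots,n$ whose parent is of type $i$, so by (\ref{2472}) this count is $z^{i,j}(n)$, establishing (\ref{8377}).

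For the diagonal case, the same observation yields $r_i+x^{i,i}(m_i(n))=r_i+\sum_{k=1}^{m_i(n)}p_i(u_k^{(i)})-m_i(n)$, where the sum counts type-$i$ children produced by the type-$i$ vertices in generations $0,\dots,n-1$ (equivalently, non-root type-$i$ vertices in generations $1,\dots,n$ whose parent is of type $i$), and $m_i(n)$ counts type-$i$ vertices in generations $0,\dots,n-1$. A direct rewriting matches this to $z^{i,i}(n)$ as given by (\ref{2472}), proving (\ref{8517}). Relation (\ref{9537}) is then obtained by summing (\ref{8377}) and (\ref{8517}) over $i\in[d]$ and invoking (\ref{2671}).

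Finally, for the uniqueness claim, I would argue by induction on $n$. The initial values $z^{i,j}(0)$ and $z^{(i)}(0)=r_i$ are prescribed. Given the $z^{(i)}(k)$ for $k\leq n-1$, the partial sum $m_i(n)$ appearing as the argument of $x^{i,j}$ in (\ref{8377})--(\ref{8517}) is known, so these two identities determine every $z^{i,j}(n)$ from the given chains $x^{i,j}$, and then (\ref{2671}) delivers each $z^{(j)}(n)$, completing the induction step. The only place requiring real care is the bookkeeping that matches $r_i+x^{i,i}(m_i(n))$ with the three-term expression defining $z^{i,i}(n)$ in (\ref{2472}); everything else is a direct translation of the BFS-versus-generation interleaving into algebra.
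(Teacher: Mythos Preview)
Your proposal is correct and follows the same route as the paper's proof, which simply states that (\ref{8377}) and (\ref{8517}) are ``direct consequences of the definition of the chains $x^{i,j}$ and $z^{i,j}$ in (\ref{6245}) and (\ref{2472}) respectively'' and that (\ref{9537}) follows from (\ref{2671}). Your BFS-versus-generation observation is exactly the unstated content of that sentence, and your inductive argument for uniqueness supplies a justification the paper's proof omits entirely.
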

\begin{proof}
It suffices to check relations (\ref{8377}) and (\ref{8517}). Then (\ref{9537}) will follow from (\ref{2671}). 
But (\ref{8377}) and (\ref{8517}) are direct consequences of the definition of the chains $x^{i,j}$ and $z^{i,j}$ 
in (\ref{6245}) and (\ref{2472}) respectively. $\;\;\Box$
\end{proof}

\subsection{Application to discrete time branching processes}\label{6477}
Recall that $\nu=(\nu_1,\dots,\nu_d)$ is a progeny distribution, such that the $\nu_i$'s are any probability
measures on $\mathbb{Z}_+^d$, such that $\nu_i(e_i)<1$. 
Let $(\Omega,\mathcal{G},P)$ be some measurable space on which, for any ${\rm r}\in\mathbb{Z}_+^d$ such that 
${\rm r}>0$, we  can define a probability measure $\p_{\rm r}$ and a  random variable 
$(F,c_F):(\Omega,\mathcal{G},\p_{\rm r})\rightarrow\mathscr{F}_d^{\rm r}$ whose law under $\p_{\rm r}$ is this of a branching 
forest with progeny law $\nu$. Then we construct from $(F,c_F)$ the random chains, 
$X=(X^{(1)},\dots,X^{(d)})$, $Z=(Z^{(1)},\dots,Z^{(d)})$ and $Z^{i,j}$, $i,j\in[d]$, exactly as in (\ref{6245}), (\ref{8477}) and 
(\ref{2472}) respectively. In particular, $X=\Psi(F,c_F)$. We can check that under $\p_{\rm r}$, $Z$ is a branching 
process with progeny distribution $\nu$. More specifically, recall from (\ref{4590}) the definition of
$\tilde{\nu}_i$, then the random processes $X$ and $Z$ satisfy the following result. 

\begin{thm}\label{5891}
Let ${\rm r}\in\mathbb{Z}_+^d$ be such that ${\rm r}>0$ and let $(F,c_F)$ be an 
$\mathscr{F}_d^{\rm r}$-valued branching forest with progeny law $\nu$ under $\p_{\rm r}$. Let  $N=(N_1,\dots,N_d)\in\overline{\mathbb{Z}}_+^d$,
where $N_i$ is the number of vertices of type $i$ in $F$.Then,
\begin{itemize}
\item[$1.$]  The random variable $N$ is almost surely the smallest solution of the system $({\rm r},X)$ in the sense of Definition $\ref{9076}$. If $\nu$ is irreducible, non degenerate and $($sub$)$critical, then almost
surely, $N_i<\infty$, for all $i\in[d]$. If $\nu$ is irreducible, non degenerate and super critical, then with some
probability $p>0$, $N_i=\infty$, for all $i\in[d]$ and with probability $1-p$,  $N_i<\infty$, for all $i\in[d]$. 
\item[$2.$]   On the space $(\Omega,\mathcal{G},P)$, we can define independent random walks $\tilde{X}^{(i)}$, $i\in[d]$, with respective step distributions $\tilde{\nu}_i$, $i\in[d]$, such that  $\tilde{X}_0^{(i)}=0$ and if $\tilde{N}=(\tilde{N}_1,\dots\tilde{N}_d)\in\overline{\mathbb{Z}}_+^d$
is the smallest solution of the system $({\rm r},\tilde{X})$, then the following identity in law
\[(X^{(i)}_k,\,0\le k\le N_i,\,i\in [d])\ed(\tilde{X}^{(i)}_k,\,0\le k\le \tilde{N}_i,\,i\in [d])\]
holds. 
\item[$3.$] The joint law of $X$ and $N$ is given as 
follows: for any integers $n_i$ and
$k_{ij}$, $i,j\in [d]$, such that $n_i>0$, $k_{ij}\in\mathbb{Z}_+$, for $i\neq j$,
  $-k_{jj}=r_j+\sum_{i\neq j}k_{ij}$ and $n_i\ge-k_{ii}$,
\begin{eqnarray*}
&&\p_{\rm r}\Big(X^{i,j}_{n_i}=k_{ij},\,\mbox{$ i,j\in [d]$ and $N={\rm n}$}\Big)=\\
&&\qquad\qquad\frac{\mbox{\rm det}(-k_{ij})}{n_1n_2\dots n_d}\prod_{i=1}^d\nu_i^{*n_i}\Big(k_{i1},\dots,k_{i(i-1)},n_i+k_{ii},k_{i(i+1)},\dots,k_{id}\Big)\,.\label{5478}
\end{eqnarray*}
\item[$4.$] The random process $Z$ is a branching process with progeny law $\nu$, which is related to $X$ through the 
time change:
\begin{equation}\label{7371}
Z^{(i)}(n)=\sum_{i=1}^dX^{i,j}(\mbox{$\sum_{k=1}^{n-1}Z^{(i)}(k)$})\,,\;\;\;n\ge1\,.
\end{equation}
\end{itemize}
\end{thm}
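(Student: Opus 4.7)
The plan is to prove the four parts in the order stated, with Parts 1 and 4 being essentially deterministic consequences of earlier results, while Parts 2 and 3 carry the genuine probabilistic content.

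For Part 1, I would argue pathwise: since $F$ almost surely takes values in $\bigcup_{\rm n}\{({\bf f},c_{\bf f})\in\mathscr{F}_d^{\rm r}: ({\bf f},c_{\bf f})\text{ has }n_i\text{ vertices of type }i\}$, Theorem \ref{5678} applied to the realization of $F$ tells us that $N$ is the smallest solution of the system $({\rm r},X)$. For the finiteness statement, observe that $\{N_i<\infty\}$ coincides with the event that the type-$i$ population goes extinct. The standard dichotomy for irreducible, non-degenerate multitype Galton--Watson processes then gives: when $\rho\le 1$, extinction is almost sure, so all $N_i<\infty$; when $\rho>1$, extinction has probability $1-p\in(0,1)$, and by irreducibility, on the non-extinction event all types survive simultaneously, so $N_i=\infty$ for every $i$.

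For Part 2, the idea is to extend $X^{(i)}$ past its natural length $N_i$ using fresh independent $\tilde{\nu}_i$-distributed increments. First I would establish the key distributional fact: in the branching forest $F$, the offspring vectors of the successive type-$i$ vertices ranked in breadth first order form an i.i.d.\ sequence with law $\nu_i$, and independence holds across types as well. Indeed, the breadth first order is a measurable reordering of the vertices that preserves the i.i.d.\ structure of offspring vectors built into the Galton--Watson construction. By the coding (\ref{6245}) and (\ref{4590}), the increments $\Delta X^{(i)}_k$ for $1\le k\le N_i$ are then i.i.d.\ with law $\tilde{\nu}_i$. I would then enlarge $(\Omega,\mathcal{G},P)$ by appending, independently of $F$ and across $i\in[d]$, sequences $(\xi^{(i)}_k)_{k\ge 1}$ of i.i.d.\ $\tilde{\nu}_i$ vectors, and define $\tilde{X}^{(i)}_n=X^{(i)}_n$ for $n\le N_i$ and $\tilde{X}^{(i)}_{n+1}=\tilde{X}^{(i)}_n+\xi^{(i)}_{n-N_i+1}$ for $n\ge N_i$. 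Then $\tilde{X}^{(i)}$ is a random walk with step law $\tilde{\nu}_i$, the $\tilde{X}^{(i)}$'s are independent, and applying Part 1 to both $X$ and $\tilde{X}$ (the first passage time depends only on the walk up to its first passage) yields $\tilde{N}=N$ almost surely, giving the claimed equality in law.

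Part 3 is the main analytic obstacle. Using Part 2, I would reduce the joint law of $(X,N)$ under $\p_{\rm r}$ to that of $(\tilde{X},\tilde{N})$ for the i.i.d.\ walks. The factor $\prod_{i=1}^d \nu_i^{*n_i}\bigl(k_{i1},\dots,n_i+k_{ii},\dots,k_{id}\bigr)$ gives the unconditional probability that $\tilde{X}^{(i)}_{n_i}$ equals the prescribed value, translating between the $\nu_i$ and $\tilde{\nu}_i$ pictures via (\ref{4590}). The extra combinatorial factor $\det(-k_{ij})/(n_1\cdots n_d)$ enforces the constraint that ${\rm n}$ is the \emph{smallest} solution of $({\rm r},\tilde{X})$; this is precisely a multidimensional cycle/ballot formula for additive random walks generalizing Kemperman's identity $\p(T_{-r}=n, S_n=-r)=(r/n)\p(S_n=-r)$, and such a formula is available in \cite{cl}. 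I would invoke it (or transcribe the short cycle-lemma argument) to conclude.

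Part 4 is immediate: applying Lemma \ref{6209} pathwise to $X=\Psi(F,c_F)$ yields identity (\ref{9537}), which is exactly (\ref{7371}), and the fact that $Z$ is a branching process with progeny law $\nu$ is built into the definition of $F$ under $\p_{\rm r}$. The real work lies in Part 3, where the correct invocation of the multidimensional cycle lemma is needed; Parts 1, 2, and 4 are either deterministic consequences of the bijection of Theorem \ref{5678} or standard Galton--Watson arguments combined with the enlargement of Part 2.
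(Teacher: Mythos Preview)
Your proposal is correct and parallels the paper's proof closely for Parts~1, 3 and~4: the paper, like you, derives Part~1 from Theorem~\ref{5678} and the extinction dichotomy, obtains Part~3 by invoking the multivariate ballot theorem of \cite{cl} (Theorem~3.4 there), and gets Part~4 from Lemma~\ref{6209}.

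The one genuine difference is in Part~2. The paper does \emph{not} first argue that the increments of $X^{(i)}$ are i.i.d.\ and then append abstract fresh $\tilde\nu_i$-increments. Instead it takes an i.i.d.\ sequence of forests $(F_n,c_{F_n})_{n\ge1}$ with $F_1=F$, codes each one, and \emph{concatenates} the resulting paths to form $\tilde X^{(i)}$. The random walk property of $\tilde X^{(i)}$ then follows directly from the branching property applied to the infinite forest, and the equality $\tilde N=N$ is deduced exactly as you do (any solution of $({\rm r},X)$ is a solution of $({\rm r},\tilde X)$, so $\tilde N\le N$; but then $\tilde N$ depends only on $\tilde X$ up to $N$, which coincides with $X$, forcing $\tilde N\ge N$). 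Your approach is equivalent in substance---the ``fresh increments'' you append play the same role as the coding increments of the extra forests $F_2,F_3,\dots$---but the paper's concatenation sidesteps the need to justify separately that the finite-length sequence $(\Delta X^{(i)}_k)_{1\le k\le N_i}$ can be embedded as an initial segment of an infinite i.i.d.\ sequence. That embedding is exactly what the forest concatenation provides for free, whereas in your version the sentence ``the breadth first order is a measurable reordering that preserves the i.i.d.\ structure'' is doing real work and would benefit from being spelled out via an exploration construction.
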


\begin{proof} The fact that $N$ is the smallest solution of the system $({\rm r},X)$ is a direct consequence of Theorem \ref{5678} 
and the definition of $X$, that is $\Psi(F,c_F)=X$. Assume that $\nu$ is irreducible, non
degenerate and (sub)critical. Then since the forest $F$ contains almost surely a finite number of vertices, all coordinates of $N$ 
must be finite from Theorem \ref{5678}. If $\nu$ is irreducible, non
degenerate and supercritical, then with probability $p>0$ the forest $F$ contains an infinite number of
vertices of type $i$, for all $i\in[d]$  and with probability $1-p$ its total population is finite. Then 
the result also follows from Theorem \ref{5678}.

In order to prove part 2, let $(F_n,c_{F_n})$ with  $(F_1,c_{F_1})=(F,c_F)$, be a sequence of independent
and identically distributed forests. Let us define $X^n=\Psi(F_n,c_{F_n})$ and then let us concatenate the
processes $X^n=(X^{n,(1)},\dots,X^{n,(d)})$ in a process that we denote $\tilde{X}$. More specifically, let
us denote by $N_i^n$ the length of $X^{n,(i)}$, then the process obtained from this concatenation is 
$\tilde{X}=(\tilde{X}^{(1)},\dots,\tilde{X}^{(d)})$, where $\tilde{X}^{(i)}_0=0$,  $N_i^0=0$ and
\begin{eqnarray*}
\tilde{X}_k^{(i)}&=&\tilde{X}_{N_i^0+\dots+N_i^{n-1}}^{(i)}+X^{n,(i)}_{k-(N^0_i+\dots+N_i^{n-1})}\,,\\
&&\qquad\qquad\mbox{if $N_i^0+\dots+N_i^{n-1}\le  k\le N_i^0+\dots+N_i^{n}$}\,,\;\;n\ge1\,.
\end{eqnarray*}
Note that $\tilde{X}$ is obtained by coding the forests  $(F_n,c_{F_n})$, $n\ge1$ successively. 
Then it readily follows from the construction of $\tilde{X}$ and the branching property that the coordinates
$\tilde{X}^{(i)}$ are independent random walks with step distribution $\tilde{\nu}_i$. Moreover $N$ is a 
solution of the system $({\rm r},\tilde{X})$, so its smallest solution, say $N'$, is necessarily smaller than $N$.
This means that $N'$ is a solution of the system $({\rm r},X)$, hence $N'=N$.

The third part is a direct consequence of the first part and the multivariate ballot Theorem which is proved in \cite{cl}, see Theorem 3.4 therein.

Then part 4, directly follows from the definition of $Z$ and Lemma \ref{6209}.
$\;\;\Box$\\
\end{proof}

Conversely, from any random walk whose step distribution is given by (\ref{4590}) we can construct  a unique branching forest 
with law $\p_{\rm r}$. The following result is a direct consequence of Theorems \ref{5891}  and \ref{5678}.

\begin{thm}\label{1259} Let $\tilde{X}^{(i)}$, $i\in[d]$ be $d$ independent random walks defined on $(\Omega,\mathcal{G},P)$, whose respective step distributions are $\tilde{\nu}_i$, and set $\tilde{X}=(\tilde{X}^{(1)},\dots,\tilde{X}^{(d)})$. Let  ${\rm r}\in\mathbb{Z}_+^d$ such that ${\rm r}>0$ and let $\tilde{N}$ be the smallest solution of the system $({\rm r},\tilde{X})$. We 
define the $\Sigma^{\rm r}_d$-valued process $X=(X^{(1)},\dots,X^{(d)})$ by
$(X^{(i)}_k,\,0\le k\le \tilde{N}_i)=(\tilde{X}^{(i)}_k,\,0\le k\le \tilde{N}_i)$.

Then $(F,c_F):=\Psi^{-1}(X)$ is a $\mathscr{F}_d^{\rm r}$-valued branching forest  $(F,c_F)$ with progeny distribution $\nu$. 
\end{thm}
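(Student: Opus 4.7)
The plan is to reduce the theorem to the identity in law established in part 2 of Theorem~\ref{5891}, combined with the bijection of Theorem~\ref{5678}.

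First I would verify that $X$ really is $\Sigma_d^{\rm r}$-valued, so that $\Psi^{-1}(X)$ makes sense. The stepwise constraints defining $S_d$ (nondecreasing coordinates $x^{i,j}$ for $i\neq j$, and $x^{i,i}_{n+1}-x^{i,i}_n\ge -1$) follow at once from \rf{4590}, which forces the support of each $\tilde\nu_i$ to lie in $\{(k_1,\dots,k_d):k_i\ge -1,\;k_j\ge 0\text{ for }j\neq i\}$. The fact that the length $\tilde N$ of $X$ is the smallest solution of the system $({\rm r},X)$ in the sense of Definition~\ref{9076} is immediate from its very definition: extending $X$ by constants past $\tilde N$ is tantamount to extending $\tilde X$ in the same way, and the smallest solution of $({\rm r},\tilde X)$ is unchanged by such an extension. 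Hence $X\in\Sigma_d^{\rm r}$ and, by Theorem~\ref{5678}, $(F,c_F):=\Psi^{-1}(X)$ is a well-defined element of $\mathscr F_d^{\rm r}$.

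Next I would introduce, on an auxiliary space, a reference branching forest $(F^\ast,c_{F^\ast})$ with law $\p_{\rm r}$ and progeny distribution $\nu$, together with its coding $X^\ast:=\Psi(F^\ast,c_{F^\ast})$ and the associated vector $N^\ast$ of lengths. By part~1 of Theorem~\ref{5891}, $N^\ast$ is almost surely the smallest solution of the system $({\rm r},X^\ast)$. By part~2 of Theorem~\ref{5891}, there exist independent random walks $\tilde Y^{(i)}$ with step distributions $\tilde\nu_i$, whose smallest solution $\tilde M$ of $({\rm r},\tilde Y)$ satisfies
\[
\bigl(X^{\ast,(i)}_k,\,0\le k\le N^\ast_i,\,i\in[d]\bigr)\ed \bigl(\tilde Y^{(i)}_k,\,0\le k\le \tilde M_i,\,i\in[d]\bigr).
\]
Because the $\tilde Y^{(i)}$ have the same joint law as our $\tilde X^{(i)}$, the right-hand side has the same joint law as $\bigl(X^{(i)}_k,\,0\le k\le \tilde N_i,\,i\in[d]\bigr)$. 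Combining, we get the identity in law between the stopped coding of $(F^\ast,c_{F^\ast})$ and our process $X$.

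Finally, Theorem~\ref{5678} says $\Psi$ is a bijection from $\mathscr F_d^{\rm r}$ to $\Sigma_d^{\rm r}$; one checks from the local rules \rf{6245} that $\Psi^{-1}$ is measurable (the forest structure up to generation $n$ is determined by finitely many increments of $X$). Applying the measurable map $\Psi^{-1}$ to both sides of the above identity in law gives
\[
\Psi^{-1}(X)\ed \Psi^{-1}(X^\ast)=(F^\ast,c_{F^\ast}),
\]
so $(F,c_F)=\Psi^{-1}(X)$ has the distribution of an $\mathscr F_d^{\rm r}$-valued branching forest with progeny distribution $\nu$, as required. The only nontrivial step is the matching of the two stopping rules: one must confirm that the smallest-solution length built into $X$ coincides with the stopping time inherited from the coding of a genuine forest. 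This is precisely what Theorem~\ref{5891}(1) and the verification in the first paragraph deliver, so no further work is needed beyond assembling these pieces.
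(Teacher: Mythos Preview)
Your proof is correct and follows essentially the same route the paper indicates: the statement is declared a ``direct consequence of Theorems~\ref{5891} and~\ref{5678}'', and you have simply unpacked that consequence carefully, verifying first that $X\in\Sigma_d^{\rm r}$, then invoking the identity in law from Theorem~\ref{5891}(2) and transporting it through the bijection $\Psi^{-1}$. One small remark on phrasing: when you write that ``extending $X$ by constants past $\tilde N$ is tantamount to extending $\tilde X$ in the same way'', what you really need (and what is true) is that the iterative construction of the smallest solution in Lemma~\ref{1377} applied to $\bar X$ produces the same sequence $k^{(n)}$ as when applied to $\tilde X$, because all relevant hitting times occur before $\tilde N$; this is the content of your observation, but the sentence as written could be read as a claim about modifying $\tilde X$ itself.
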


\section{The continuous time setting}\label{proofs}

\subsection{Proofs of Theorem  \ref{6803} and Proposition \ref{1269}.}\label{1842} Let $Y=(Y^{(1)},\dots,Y^{(d)})$ be 
the underlying random walk of the compound Poisson process $X$, that is the random walk such that there
are independent Poisson processes $N^{(i)}$, with parameters $\lambda_i$ such that $X^{(i)}_t=Y^{(i)}(N^{(i)}_t)$ and $(N^{(i)},Y^{(j)},\,i,j\in[d])$ are independent. Then from Lemma \ref{1377}, there is a random time ${\rm s}\in\overline{\mathbb{Z}}_+^d$, such that almost surely, for all 
$j\in U_{\rm s}$, $x_j+\sum_{i=1}^dY^{i,j}(s_i)=0$. Moreover, if ${\rm s}'$ is any time satisfying this property, then ${\rm s}'\ge{\rm s}$.  For $i\in[d]\setminus U_{\rm s}$, the latter equality implies that $Y^{i,j}(\infty)<\infty$. Since $Y^{i,j}$ is a renewal process, it is possible only 
if $Y^{i,j}\equiv0$, a.s., so that we can write:  almost surely, 
\[x_j+\sum_{i,\,i\in U_{\rm s}}Y^{i,j}(s_i)=0\,,\;\;\;\mbox{for all $j\in U_{\rm s}$}\,.\]
Then the first part of the Theorem follows from the construction of $X$ as a time change of $Y$. More formally, the coordinates of $T_x$ are given by $T^{(i)}_x=\inf\{t:N^{(i)}(t)={\rm s}_i\}$, so that in particular,
$s_i=N^{(i)}(T^{(i)}_x)$.

Additivity property of $T_x$ is a consequence of Lemma \ref{1377} and time change. From this lemma, we can deduct that for all $x,y\in\mathbb{Z}_+^d$, if ${\rm s}$ is the smallest solution of $(x+y,Y)$, then conditionally to $s_i<\infty$, for all $i\in[d]$, the smallest solution ${\rm s}_1=(s_{1,1},\dots,s_{1,d})$ of $(x,Y)$, and satisfies ${\rm s}_1\le {\rm s}$ and 
$s-s_1$ is the smallest solution of the system $(y,\tilde{Y})$, where 
$\tilde{Y}^{(i)}_k=Y^{(i)}_{s_{1,i}+k}-Y^{(i)}_{s_{1,i}}$, $k\ge0$. Moreover, $\tilde{Y}=(\tilde{Y}^{(i)},i\in[d])$ 
has the same law as $Y$ and is independent of $(Y_k^{(i)},\,0\le k\le s_{1,i})$. Using the time change, we obtain,
\[L_{x+y}\ed L_x+\tilde{L}_y\,,\]
where $L_x$ has the law of $T_x$ conditionally on $T_{x}^{(i)}<\infty$, for all $i\in[d]$ and $\tilde{L}_y$
is an independent copy of $L_y$. Then identity (\ref{6290}) follows.

The law of $T_x$ on $\mathbb{R}_+^d$ follows from time change and the same result in the discrete time
case obtained in \cite{cl}, see Theorem 3.4 therein.

Proposition \ref{1269} is a direct consequence part 1 of Theorem \ref{6803} and the time change.

\subsection{Multitype forests with edge lengths}\label{2312} 
A $d$ type forest with edge lengths is an element
$({\bf f},c_{\bf f},\ell_{\bf f})$, where  $({\bf f},c_{\bf f})\in\mathscr{F}_d$ and $\ell_{\bf f}$ is some application 
$\ell_{\bf f}:{\bf v}({\bf f})\rightarrow(0,\infty)$. For $u\in{\bf v}({\bf f})$, the quantity $\ell_{\bf f}(u)$ will be called the life time 
of $u$. It corresponds to the length of an edge incident to $u$ in $({\bf f},c_{\bf f},\ell_{\bf f})$ whose color is this of $u$.
This edge is a segment which is closed at the extremity corresponding to $u$ and open at the other extremity. 
If $u$ is not a leaf of $({\bf f},c_{\bf f})$ then $\ell_{\bf f}(u)$ corresponds to the length of the edge between $u$ and its 
children in the continuous forest $({\bf f},c_{\bf f},\ell_{\bf f})$. To each tree of $({\bf f},c_{\bf f})$ corresponds a tree of  
$({\bf f},c_{\bf f},\ell_{\bf f})$ which is considered as a continuous metric space, the distance being given by the Lebesgue 
measure along the branches. To each forest $({\bf f},c_{\bf f},\ell_{\bf f})$ we associate a time scale such that a vertex
$u$ is born at time $t$ if the distance between $u$ and the root of its tree in $({\bf f},c_{\bf f},\ell_{\bf f})$ is
$t$. Time $t$ is called the birth time of $u$ in  $({\bf f},c_{\bf f},\ell_{\bf f})$ and it is denoted by $b_{\bf f}(u)$.
The death time of $u$ in 
$({\bf f},c_{\bf f},\ell_{\bf f})$ is then $b_{\bf f}(u)+\ell_{\bf f}(u)$. If $s\in [b_{\bf f}(u),b_{\bf f}(u)+\ell_{\bf f}(u))$ then we 
say that $u$ is alive at time $s$ in the forest  $({\bf f},c_{\bf f},\ell_{\bf f})$. We denote by $\mbox{h}_{\bf f}$ the smallest 
time when no individual is alive in $({\bf f},c_{\bf f},\ell_{\bf f})$.

The set of $d$ type forests with edge lengths will be denoted by $F_d$. The subset of $F_d$ of 
elements $({\bf f},c_{\bf f},\ell_{\bf f})$ such that $({\bf f},c_{\bf f})\in\mathscr{F}_d^{\rm r}$ will be denoted 
by $F_d^{\rm r}$. Elements of $F_d$ will be represented as in Figure \ref{fig1}.\\

To each forest $({\bf f},c_{\bf f},\ell_{\bf f})\in F_d^{\rm r}$, we associate the multidimensional the step
functions, $(z^{(i)}(t),t\ge0)$ that are defined as follows:

\begin{equation}\label{2037}
z^{(i)}(t)=\mbox{Card}\{u\in{\bf v}({\bf f}):c_{\bf f}(u)=i,\,\mbox{$u$ is alive at time $t$ in $({\bf f},c_{\bf f},\ell_{\bf f})$.}\}
\end{equation}
Then $(z^{i,j}(t),t\ge0)$  is defined by

\begin{equation}\label{2847}
z^{i,j}(t)=\left\{\begin{array}{l}
\mbox{Card}\{u\in{\bf v}^*({\bf f}):c_{\bf f}(u)=j,\,c_{\bf f}(u^*)=i,\,b_{\bf f}(u)\le t\}\,,\;\;\mbox{if $i\neq j$,}\\
r_i+\mbox{Card}\{u\in{\bf v}^*({\bf f}):c_{\bf f}(u)=i,\,c_{\bf f}(u^*)=i,\,b_{\bf f}(u)\le t\}-\\
\mbox{Card}\{u\in{\bf v}({\bf f}):c_{\bf f}(u)=i,\,b_{\bf f}(u)+\ell_{\bf f}(u)\le t\}\,,\;\;\mbox{if $i= j$.}
\end{array}\right.
\end{equation}
It readily follows from these definitions that 
\[z^{(j)}(t)=\sum_{i=1}^dz^{i,j}_t\,,\;\;\;t\ge0\,.\]

We now define the discretisation of forests of $F_d$, with some span $\delta>0$. Let $({\bf f},c_{\bf f},\ell_{\bf f})\in F_d$, 
then on each tree of 
$({\bf f},c_{\bf f},\ell_{\bf f})\in F_d$, we place  new vertices at distance $n\delta$, $n\in\mathbb{Z}_+$ of the root along 
all the branches. A vertex which is placed along an edge with color $i$ has also color $i$. Then we define a forest in
$\mathscr{F}_d$ as follows. A new vertex $v$ is the child of a new vertex $u$ if and only if both vertices belong to the 
same branch of $({\bf f},c_{\bf f},\ell_{\bf f})$, and there is $n\ge0$ such that $u$ and $v$ are respectively at distance $n\delta$ 
and $(n+1)\delta$ from the root. This transformation defines an application which we will denote by
\begin{eqnarray*}
D_\delta:F_d&\rightarrow&\mathscr{F}_d\\
({\bf f},c_{\bf f},\ell_{\bf f})&\mapsto&D_\delta({\bf f},c_{\bf f},\ell_{\bf f})\,.
\end{eqnarray*}
Note that with this definition, the roots of the three forests $({\bf f},c_{\bf f})$, $({\bf f},c_{\bf f},\ell_{\bf f})$ and 
$D_\delta({\bf f},c_{\bf f},\ell_{\bf f})$ are the same and more generally, a vertex of $D_\delta({\bf f},c_{\bf f},\ell_{\bf f})$
corresponds to a vertex $u$ of $({\bf f},c_{\bf f},\ell_{\bf f})$ if and only if $u$ is at a distance equal to $n\delta$ from the 
root. The definition of the discretisation of a forest with edge lengths should be obvious from Figure \ref{fig3}.

\begin{figure}[hbtp]

\vspace*{.2cm}

\hspace*{-0.5cm}{\includegraphics[height=350pt,width=500pt]{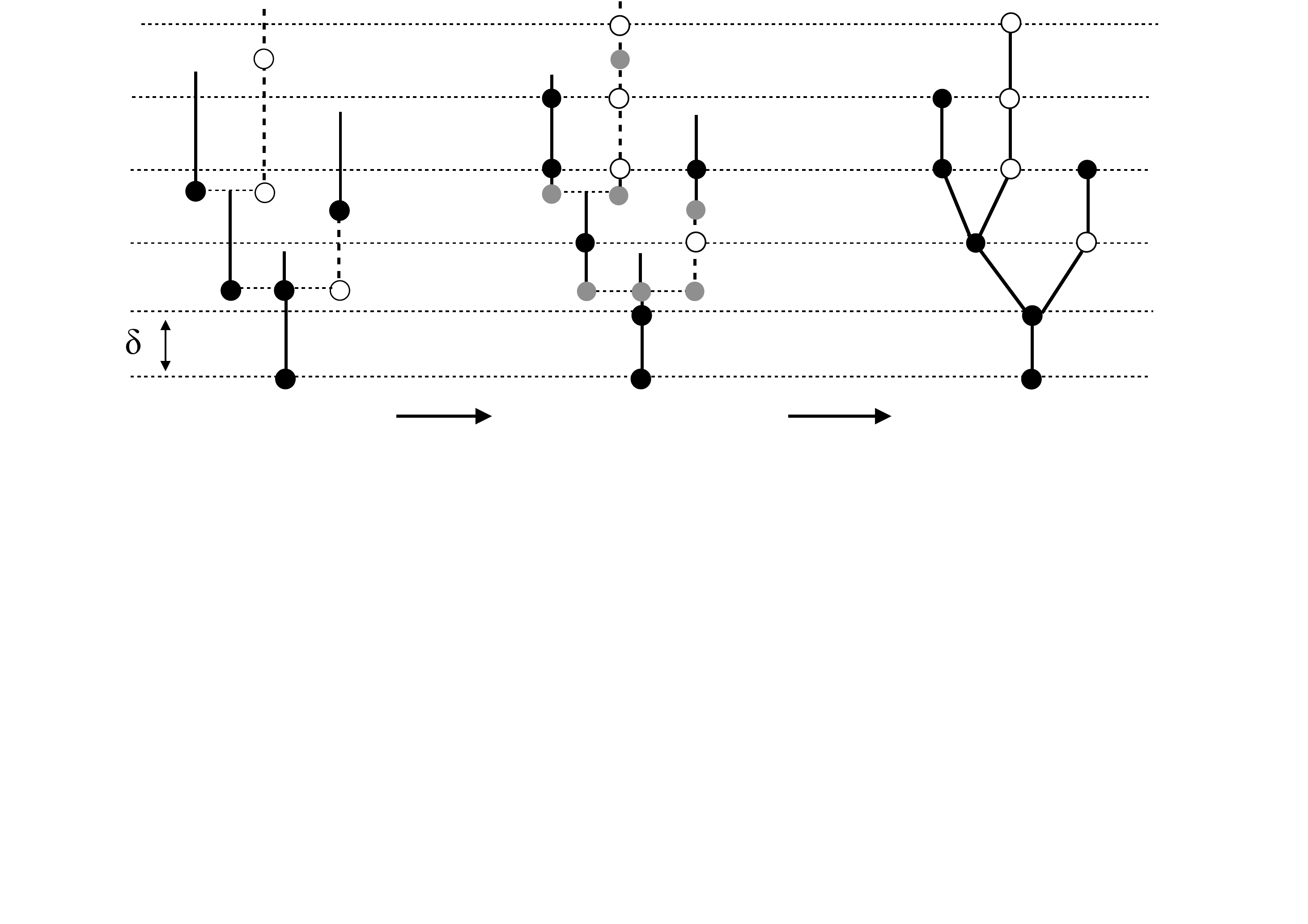}}
 
\vspace{-6cm}

\caption{Discretisation of a two type tree with edge lengths.}\label{fig3}
\end{figure}

\subsection{Multitype branching forests with edge lengths.}  Recall from Section \ref{main} that 
$\lambda_1,\dots,\lambda_d$ are positive, finite and constant rates, and that $\nu=(\nu_1,\dots,\nu_d)$, where $\nu_i$,
$i\in[d]$ are any distributions in $\mathbb{Z}_+^d$. From the setting established in Subsection \ref{2312}, we can define 
a branching forest with edge lengths as a random variable $(F,c_F,\ell_F):(\Omega,\mathcal{G},\p_{\rm r})\rightarrow 
(F_d,\mathcal{H}_d)$, where $\mathcal{H}_d$ is the sigma field of the Borel sets of $F_d$ endowed with the Gromov-Hausdorff topology (see Section 2.1 in \cite{le}) and where the law of   
$(F,c_F):(\Omega,\mathcal{G},\p_{\rm r})\rightarrow\mathscr{F}_d^{\rm r}$ under  $\p_{\rm r}$
is this of a discrete branching forest with progeny distribution $\nu$, as defined in Subsection \ref{6477}. Besides, let $N_i$
 be the number of vertices of type $i$ in $(F,c_F)$, then for all ${\rm n}=(n_1,\dots,n_d)\in\overline{\mathbb{Z}}_+^d$, conditionally  on $N_i=n_i$, $i\in[d]$, $(\ell(u_n^{(i)}))_{0\le n\le n_i}$ are sequences of i.i.d. exponentially distributed random
variables with respective parameters $\lambda_i$, and $[(\ell(u_n^{(i)}))_{0\le n\le n_i},\,i\in[d],\,(F,c_F)]$ are independent.\\

\noindent Then we have the following result which is straightforward from the above definitions.

\begin{proposition}\label{7242}
Let $(F,c_F,\ell_F)$ be  a branching forest with edge lengths with progeny distribution  $\nu=(\nu_1,\dots,\nu_d)$ on
$\mathbb{Z}_+^d$ and life time rates $\lambda_1,\dots,\lambda_d\in(0,\infty)$. Then for all ${\rm r}$, under $\p_{\rm r}$, 
the process $Z=(Z_i(t),\,i\in[d]\,,t\ge0)$ which is defined as in $(\ref{2037})$ with respect to $(F,c_F,\ell_F)$ is a 
continuous time, $\mathbb{Z}_+^d$-valued branching process starting at $Z_0={\rm r}$, with progeny distribution
$\nu=(\nu_1,\dots,\nu_d)$ and life time rates $\lambda_1,\dots,\lambda_d$.
\end{proposition}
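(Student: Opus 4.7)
The approach is to show that the process $Z=(Z^{(1)},\dots,Z^{(d)})$ defined by $(\ref{2037})$ is a time-homogeneous continuous time Markov chain on $\mathbb{Z}_+^d$ whose transition rates are exactly those of a multitype branching process: from any state $z$ with $z_i>0$, the rate of jumping to $z-e_i+y$ equals $\lambda_i z_i\nu_i(y)$. The initial condition $Z(0)={\rm r}$ is immediate from $(\ref{2037})$, since the only vertices alive at time $0$ are the roots, and there are exactly $r_i$ roots of type $i$ by the assumption that $(F,c_F)\in\mathscr{F}_d^{\rm r}$.

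Next I would analyse the first jump time $T_1=\min_u\ell_F(u)$, where the minimum ranges over the $r_1+\dots+r_d$ roots of $F$. Conditionally on $(F,c_F)$, the life times $\ell_F(u)$ of the roots are independent exponentials with parameters $\lambda_{c_F(u)}$, so $T_1$ is exponential with rate $\Lambda({\rm r}):=\sum_{i=1}^d r_i\lambda_i$, and independently of $T_1$ the dying individual is of type $i$ with probability $r_i\lambda_i/\Lambda({\rm r})$. By the definition of a branching forest with progeny distribution $\nu$, the offspring of a type-$i$ vertex is drawn from $\nu_i$ and is independent of everything else. Hence at time $T_1$ the process jumps from ${\rm r}$ to ${\rm r}-e_i+y$ at total rate $\lambda_i r_i\nu_i(y)$, matching the announced transition rates.

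To conclude, I would establish the (strong) Markov property at $T_1$ and iterate. This rests on two ingredients. First, the branching property of $(F,c_F)$: the sub-forests rooted at the surviving initial roots and at the newly born children of the dying root are independent branching forests with progeny law $\nu$, conditionally on the types of their roots. Second, the memorylessness of the exponential distribution: conditionally on the history up to $T_1$ and on $Z(T_1)$, the residual life times of the individuals alive at $T_1$ are again i.i.d.\ exponentials with the prescribed rates, independent of the pre-$T_1$ past; the life times of the newly created children are fresh independent exponentials by the hypothesis on $\ell_F$. Combining these two facts shows that $(Z(T_1+s),\,s\ge 0)$ has the same law as $Z$ started from $Z(T_1)$ and is independent of $\mathcal{F}_{T_1}$. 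An obvious induction over the successive jump times then proves that $Z$ is the desired continuous time multitype branching process.

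The only subtlety is the combined use of the branching property at a vertex and the memorylessness of the exponential clocks at an event time; once both structures are isolated, the verification of the Markov property and transition rates is routine, which is why the proposition can be claimed to be straightforward from the definitions.
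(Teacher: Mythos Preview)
Your argument is correct and is precisely the kind of direct verification the paper has in mind: the paper does not give a proof of this proposition, merely stating before it that the result is ``straightforward from the above definitions.'' Your analysis of the first jump via the minimum of independent exponentials, together with the memorylessness of the exponential lifetimes and the branching structure of $(F,c_F)$, is the standard way to make that claim rigorous.
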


\noindent The law of a discretised branching forest with edge lenghts is given by the following lemma. 

\begin{lemma}\label{4166} Let $(F,c_F,\ell_F)$ be  a branching forest with edge lengths, with progeny distribution  
$\nu=(\nu_1,\dots,\nu_d)$ on
$\mathbb{Z}_+^d$ and life time rates $\lambda_1,\dots,\lambda_d\in(0,\infty)$.
For $\delta>0$, the forest $D_\delta(F,c_F,\ell_F)$ is a $($discrete$)$ branching forest with progeny distribution:
\[\nu_i^{(\delta)}({\rm k})=\p_{e_i}(Z_\delta={\rm k})\,,\;\;\;{\rm k}=(k_1,\dots,k_d)\in\mathbb{Z}_+^d\,,\]
where $e_i$ is the $i$-th unit vector of $\mathbb{Z}_+^d$.
\end{lemma}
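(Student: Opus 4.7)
The plan is to identify the vertices of $D_\delta(F,c_F,\ell_F)$ with the individuals alive in $(F,c_F,\ell_F)$ at the discrete times $n\delta$, and then to apply the branching property together with the memoryless property of the exponential lifetimes to compute the progeny distribution at each vertex of the discretisation.

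First I would observe the natural color-preserving bijection: the new vertices of $D_\delta(F,c_F,\ell_F)$ placed at distance $n\delta$ from the roots are in bijection with the individuals $u\in{\bf v}(F)$ such that $b_F(u)\le n\delta<b_F(u)+\ell_F(u)$, i.e.\ those alive at time $n\delta$, and the color of each such new vertex is the color of the edge on which it sits, namely $c_F(u)$. For $n=0$ this gives exactly $r_i$ roots of color $i$ in $D_\delta$, matching the initial condition. By definition of $D_\delta$, the children in $D_\delta$ of the new vertex associated to $u$ at distance $n\delta$ are the new vertices at distance $(n+1)\delta$ lying on the branches of $(F,c_F,\ell_F)$ passing through the edge of $u$; by the bijection, these correspond exactly to the descendants of $u$ (including $u$ itself, if its lifetime extends past $(n+1)\delta$) that are alive at time $(n+1)\delta$ in the subforest rooted at $u$ at time $n\delta$.

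Next I would invoke the branching property of $(F,c_F,\ell_F)$: conditionally on the set of individuals alive at time $n\delta$ and on their colors, the continuous subforests descending from distinct alive individuals are mutually independent, and the subforest descending from an alive individual of color $i$ has the distribution of a continuous branching forest with progeny distribution $\nu$ and rates $(\lambda_j)$ issued from $e_i$ at time $0$. For individuals that are mid-life at time $n\delta$ this uses the memoryless property of the exponential lifetime: the residual lifetime of an alive individual of color $i$ is again exponential of parameter $\lambda_i$ and its forthcoming offspring distribution is $\nu_i$, independently of the past. Combining this with the description of children in $D_\delta$ obtained in the first step and with Proposition \ref{7242}, the vector of colors of children in $D_\delta$ of a vertex of color $i$ is distributed as $Z_\delta$ under $\p_{e_i}$, and the progenies of distinct vertices at the same discrete distance are mutually independent conditionally on their colors. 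Iterating the argument over $n\ge 0$ yields that $D_\delta(F,c_F,\ell_F)$ is a discrete branching forest with progeny distribution $\nu^{(\delta)}=(\nu_1^{(\delta)},\dots,\nu_d^{(\delta)})$ given by $\nu_i^{(\delta)}({\rm k})=\p_{e_i}(Z_\delta={\rm k})$.

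The main technicality is the careful statement and application of the branching/Markov property in continuous time in the presence of edge lengths, specifically the splitting at the deterministic times $n\delta$ when some individuals are mid-life: one must verify, via the memoryless property, that the law of the subsequent evolution restarted from each alive individual of color $i$ is the same as that of a fresh continuous branching forest from $e_i$, independently across alive individuals. Once that splitting is justified, the rest is bookkeeping using the explicit construction of $D_\delta$.
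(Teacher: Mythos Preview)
Your proposal is correct and follows essentially the same route as the paper's own proof: invoke the branching property of $(F,c_F,\ell_F)$ at the times $n\delta$ to see that vertices of $D_\delta$ reproduce independently with a law depending only on their type, and then identify that law as $\p_{e_i}(Z_\delta=\cdot)$. Your version is in fact more careful than the paper's, which is quite terse; in particular you explicitly isolate the memoryless property of the exponential lifetimes as the ingredient that lets one restart the subforest from a mid-life individual as a fresh forest from $e_i$, a point the paper leaves implicit.
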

\begin{proof} The fact that $D_\delta(F,c_F,\ell_F)$ is a discrete branching forest is a direct consequence of its construction.
Indeed, at generation $n$, that is at time $n\delta$, the vertices of this forest inherits from the branching property of 
$(F,c_F,\ell_F)$ the fact they will give birth to some progeny, independently of each other and with some distribution which only depends on their type and $\delta$. Then its remains  to determine the progeny distribution $\nu^{(\delta)}$. But it is
obvious from the construction of $D_\delta(F,c_F,\ell_F)$ that $\nu_i^{(\delta)}$ is the law of the total offspring at time 
$\delta$ of a root of type $i$ in the forest $(F,c_F,\ell_F)$. This is precisely the expression which is given in the 
statement.  $\;\;\Box$
\end{proof}

\subsection{Proof of Theorem  \ref{2379}.}  

Let $(F,c_F,\ell_F)$ be  a branching forest issued from $x$, with edge lengths, with progeny distribution  $\nu=(\nu_1,\dots,\nu_d)$ 
on $\mathbb{Z}_+^d$ and life time rates $\lambda_1,\dots,\lambda_d\in(0,\infty)$. Then from Proposition \ref{7242}, the process 
$Z=(Z^{(i)}(t),\,i\in[d],\,t\ge0)$ which is defined as in $(\ref{2037})$ with respect to $(F,c_F,\ell_F)$ is a continuous time, 
$\mathbb{Z}_+$-valued branching process with progeny distribution $\nu=(\nu_1,\dots,\nu_d)$ and life time rates 
$\lambda_1,\dots,\lambda_d$.\\ 

Let $\delta>0$ and consider the discrete forest $D_\delta(F,c_F,\ell_F)$ whose progeny 
distribution is given by Lemma \ref{4166}. Let ${\rm Z}^{\delta}=({\rm Z}^{\delta,(1)},\dots,{\rm Z}^{\delta,(d)})$ be the associated (discrete time) branching process and let 
${\rm Z}^{\delta,i}:=({\rm Z}^{\delta,i,1},\dots,{\rm Z}^{\delta,i,d})$, $i\in[d]$ be the decomposition 
of ${\rm Z}^{\delta}$, as it is defined in (\ref{2472}). Then it is straightforward that 
\begin{equation}\label{5792}
({\rm Z}^{\delta,i,j}([\delta^{-1} t])\,,t\ge0)\rightarrow (Z^{i,j}_t\,,t\ge0)\,,
\end{equation}
almost surely on the Skohorod's space of c\`adl\`ag paths toward the process $Z^{i,j}$, as $\delta$ tends to 0, for all $i,j\in[d]$, 
where  $Z^{i,j}$ is the decomposition of $Z$ as it is defined in (\ref{2847}).\\

Now, let ${\rm X}^\delta=({\rm X}^{\delta,{(i)}},\,i\in[d])$ be the coding random walk associated to $D_\delta(F,c_F,\ell_F)$,
as in Theorem \ref{5891}. We will first assume that ${\rm X}^\delta$ is actually the coding random walk of a sequence of  i.i.d. discrete forests with the same distribution as $D_\delta(F,c_F,\ell_F)$ in the same manner
as in the proof of part 2 of Theorem \ref{5891}, so that in particular, ${\rm X}^\delta$ is not a stopped random walk.\\

For $i\in[d]$, let  $\tau^{{\rm X}^\delta}_{i,n}$ and $\tau^{{\rm Z}^\delta}_{i,n}$ be the sequences of 
jump times of the discrete time processes ${\rm X}^{\delta,(i)}$ and ${\rm Z}^{\delta,i}$. That is the ordered sequences of times 
such that $\tau^{{\rm X}^\delta}_{i,0}=\tau^{{\rm Z}^\delta}_{i,0}=0$ and
$\Delta {\rm X}^{\delta,(i)}_n:= {\rm X}^{\delta,(i)}(\tau^{{\rm X}^\delta}_{i,n})-{\rm X}^{\delta,(i)}(\tau^{{\rm X}^\delta}_{i,n-1})\neq0$ 
and  $\Delta {\rm Z}^{\delta,i}_n:={\rm Z}^{\delta,i}(\tau^{{\rm Z}^\delta}_{i,n})-{\rm Z}^{\delta,i}
 (\tau^{{\rm Z}^\delta}_{i,n-1})\neq0$, for $n\ge1$. Fix $k\ge1$, then since two jumps of the
process $Z$ almost surely never occur at the same time, there is $\delta_0$,
sufficiently small such that for all $0<\delta\le \delta_0$, the sequences  $(\Delta {\rm X}^{\delta,(i)}_n,\,0\le n\le k)$ and 
$(\Delta {\rm Z}^{\delta,i}_n,\,0\le n\le k)$ are a.s. identical, for all $i\in[d]$.  Moreover, from 
Lemma \ref{4166} and Theorem \ref{5891}, the jumps $\Delta {\rm X}^{\delta,(i)}_n$ have law 
\[\mu_i^{(\delta)}({\rm k})=\frac{\tilde{\nu}_i^{(\delta)}({\rm k})}{1-\tilde{\nu}_i^{(\delta)}(0)}\,,\;\;{\rm k}\neq 0\,,\;\;
\mu_i^{(\delta)}(0)=0\,,\]
where $\tilde{\nu}_i^{(\delta)}({\rm k})=\p_{e_i}(Z_\delta=(k_1,\dots,k_{i-1},k_i+1,k_{i+1},\dots,k_d))$. The measure 
$\mu^{(\delta)}_i$ converges weakly to $\mu_i$ defined in (\ref{7458}), as $\delta\rightarrow0$. Hence from (\ref{5792}), the 
sequence $(\Delta {\rm Z}^{\delta,i}_n,\,n\ge 0)$ converges almost surely toward the sequence $(\Delta Z^{i}_n,\,n\ge 0)$ of 
jumps of the process $(Z^i_t,\,t\ge0):=(Z^{i,j}_t,\,j\in[d]\,,t\ge0)$, which is therefore a sequence of i.i.d. random variables, with 
law $\mu_i$.\\

On the other hand, it follows from (\ref{8377}) and (\ref{8517}) in Lemma \ref{6209} and the fact that two 
jumps of the process $Z$ almost surely never occur at the same time, that for all $n_1$, there is
$\delta_1>0$ sufficiently small, such that for all $n\le n_1$ and $0<\delta\le\delta_1$,
\[\tau^{{\rm X}^\delta}_{i,n}-\tau^{{\rm X}^\delta}_{i,n-1}=
\sum_{k=\tau^{{\rm Z}^\delta}_{i,n-1}}^{\tau^{{\rm Z}^\delta}_{i,n}}{\rm Z}^{\delta,(i)}_k\,,\;\;\;n\ge1\,.\]
From Lemma \ref{4166} , the latter is a sequence of i.i.d. geometrically distributed random variables with parameter 
$1-\p_{e_i}(Z_\delta=e_i)$.  Hence from (\ref{5792}), the sequence 
\[\delta\cdot\sum_{k=\delta^{-1}\tau^{{\rm Z}^\delta}_{i,n-1}}^{\delta^{-1}\tau^{{\rm Z}^\delta}_{i,n}}{\rm Z}^{\delta,(i)}_k\,,
\;\;\;n\ge1\]
converges almost surely toward the sequence
\[\int_{\tau^{Z}_{i,n-1}}^{\tau^{Z}_{i,n}} Z^{(i)}_t\,dt\,,\;\;\;n\ge1\,,\]
as $\delta\rightarrow0$, where $(\tau^{Z}_{i,n})$ is the sequence of jump times of $Z^{i}$. 
Moreover the variables of this sequence are i.i.d. and exponentially distributed with parameter 
$\lim_{\delta\rightarrow0}\delta^{-1}(1-\p_{e_i}(Z_\delta=e_i))=\lambda_i$.\\ 

Then since $({\rm X}^{\delta,(i)}_n)$ is a random walk, 
the sequences $(\Delta {\rm X}^{\delta,(i)}_n)_{n\ge0}$, $(\tau^{{\rm X}^\delta}_{i,n})_{n\ge0}$, $i\in[d]$ are independent.
Therefore, from the convergences proved above, the sequences $(\Delta Z^{i}_n,\,n\ge 0)$ and 
$(\int_{\tau^{Z}_{i,n-1}}^{\tau^{Z}_{i,n}} Z^{(i)}_t\,dt\,,n\ge1)$ are independent. Then we have proved that 
the process 
\begin{equation}\label{1547}
X^{(i)}:=(Z^{i}(\tau_t^{(i)}),\,t\ge0)\,,\;\;\mbox{ where $\tau_t^{(i)}=\inf\{s:\int_0^s Z^{(i)}_u\,du>t\}$,}
\end{equation}
is a compound Poisson process  with parameter  $\lambda_i$ and jump distribution $\mu_i$. Moreover, it follows from 
the independence between the random walks  $({\rm X}^{\delta,(i)}_n)$, $i\in[d]$ that the processes $(Z^{i}(\tau_t^{(i)}),\,t\ge0)$, $i\in[d]$ are independent.\\

Now from part 1 of Theorem \ref{5891}, if $N^{\delta}_i$ is the total population of type $i\in[d]$ in the forest  
$D_\delta(F,c_F,\ell_F)$, then $N^{\delta}=(N^{\delta}_1,\dots,N^{\delta}_d)$ is the smallest solution of the system $(x,{\rm X}^{\delta})$. Moreover it follows from the construction of $D_\delta(F,c_F,\ell_F)$, that 
$\lim_{\delta\rightarrow0}\delta N^{\delta}_i=\int_0^\infty Z^{(i)}_t\,dt:=T_x^{(i)}$, almost surely. Note that
$T_x^{(i)}$ represents the total length of the branches of type $i$ in the forest $(F,c_F,\ell_F)$. Then from 
the definition of $X^{(i)}$ in (\ref{1547}) it appears the these compound Poisson processes are stopped at $T^{(i)}_x$  and that $T_x=(T_x^{(1)},\dots,T_x^{(d)})$ satisfies (\ref{3678}).\\

The fact that $(\ref{3521})$ is invertible is a direct consequence the first part of the following lemma.

\begin{lemma}\label{1592}Let $x=(x_1,\dots,x_d)\in\mathbb{Z}_+^d$ and $\{(x^{i,j}_t,\,t\ge0),\;i,j\in[d]\}$ be 
a family of c\`adl\`ag $\mathbb{Z}$-valued, step functions such that  for $i\neq j$, $x^{i,j}=0$, $x^{i,j}$ are
increasing, $x^{i,i}_0=x_i\ge0$ and $x^{i,i}$ are downward skip free, i.e. 
$x^{i,i}_t-x^{i,i}_{t-}\ge-1$, for all $t\ge0$, with $x^{i,i}_{0-}=x_i$. Then there exists a $($unique$)$ 
time $t_x=(t_x^{(1)}\dots,t_x^{(d)})\in\overline{\mathbb{R}}_+^d$ such that 
\begin{equation}\label{3138}
x_j+\sum_{i=1}^dx^{i,j}(t_x^{(i)})=0\,,\;\;\;\mbox{for all $j$ such that $t^{(j)}_x<\infty$}\,,
\end{equation}
and if $t'_x$ is any time satisfying $(\ref{3138})$, then  $t'_x\ge t_x$. 

Moreover, the  system
\[z^{i,j}_t=\left\{\begin{array}{ll}
x^{i,j}\left(\int_0^tz^{(i)}_s\,ds\right)\,,\;\;t\ge0\,,\;\;\mbox{if $i\neq j$}\\
x_i+x^{i,i}\left(\int_0^tz^{(i)}_s\,ds\right)\,,\;\;t\ge0\,,\;\;\mbox{if $i=j$}\\
\end{array}\right.\]
admits a unique solution $\{(z^{i,j}_t,\,t\ge0),\;i,j\in[d]\}$ and the system
\[z^{(j)}_t=x_j+\sum_{i=1}^dx^{i,j}\left(\int_0^tz^{(i)}_s\,ds\right)\,,\;\;t\ge0,\,j\in[d]\]
admits a unique solution $(z^{(i)}_t,\,t\ge0,\,i\in[d])$. These solutions are functionals of stopped functions 
$\{(x^{i,j}_t,\,0\le t\le t_x^{(i)}),\;i,j\in[d]\}$.
\end{lemma}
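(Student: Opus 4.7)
My plan is to handle the three assertions in sequence: first the existence and minimality of $t_x$, then the existence and uniqueness of the solutions to the two time-changed systems, and finally the dependence only on the stopped trajectories.

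For $t_x$, I would mirror the iterative construction from the proof of Lemma \ref{1377} in the continuous-time deterministic setting. Set $u^{(0)} := 0$ and recursively, for $j \in [d]$,
\[
u_j^{(n)} := \inf\Bigl\{ t \ge 0 : x^{j,j}_t = -\bigl( x_j + \sum_{i \neq j} x^{i,j}(u_i^{(n-1)}) \bigr) \Bigr\},
\]
with $\inf\emptyset = +\infty$. Since each off-diagonal $x^{i,j}$ is nondecreasing, the target level is nonincreasing in $n$; since $x^{j,j}$ is downward skip free, every level at or above its running infimum is actually attained. Hence $(u^{(n)})$ is componentwise nondecreasing, and $t_x := \lim_n u^{(n)}$ exists in $\overline{\mathbb{R}}_+^d$ and satisfies $(\ref{3138})$ by passing to the limit (coordinates with $t_x^{(j)} = \infty$ are simply excluded from the relation, exactly as in the discrete case). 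Minimality is obtained by induction on $n$: any solution $t'$ satisfies $t' \ge u^{(n)}$ for every $n$, hence $t' \ge t_x$. This uniquely characterizes $t_x$.

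For the time-change system, the strategy is a piecewise construction exploiting that each $z^{(i)}$ will be piecewise constant between jumps. Initialize $z^{i,i}_0 = x_i$ and $z^{i,j}_0 = 0$ for $i \neq j$, and set $\phi^{(i)}(t) := \int_0^t z^{(i)}_s\,ds$. On any half-open interval $[t_0, t_0 + \delta)$ on which all $z^{(i)}$ are constant, $\phi^{(i)}$ is affine with slope $z^{(i)}_{t_0}$, so the next system jump is forced at
\[
t_1 := \inf\bigl\{ t > t_0 : \exists\, i,j \mbox{ with } z^{(i)}_{t_0} > 0 \mbox{ and } \phi^{(i)}(t) \mbox{ a jump point of } x^{i,j} \bigr\},
\]
at which one updates each $z^{i,j}$ by $\Delta x^{i,j}(\phi^{(i)}(t_1))$ for the $i$'s with $z^{(i)}_{t_0} > 0$. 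Because the $x^{i,j}$ are step functions, their jump times are locally finite in $s$, and since $\phi^{(i)}$ is affine with positive slope between breakpoints, the sequence of breakpoints in $t$ has no finite accumulation before the moment $T$ when all $z^{(i)}$ have collapsed to $0$; beyond $T$, both sides of the equation vanish. This produces a well-defined solution, and uniqueness follows because any other solution must share the same jump structure, each jump time being rigidly determined by the data. Summing in $i$ recovers the scalar $z^{(j)}$-system, and its uniqueness is an immediate corollary.

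The functional dependence on the stopped trajectories is then immediate: by comparing the piecewise construction of the $z^{i,j}$ with the iterative scheme of the first part, one checks inductively that the time-changes $\phi^{(i)}(t)$ never exceed $t_x^{(i)}$, so only the values $x^{i,j}(s)$ for $s \le t_x^{(i)}$ are ever consulted. The main obstacle I anticipate is the bookkeeping in the piecewise construction, in particular verifying cleanly that it does not accumulate in finite time before absorption and that the resulting absorption time matches the $t_x$ built in the first step; both facts boil down to the local finiteness of jumps of step functions combined with the minimality of $t_x$, but they must be spelled out carefully to conclude a truly global unique solution.
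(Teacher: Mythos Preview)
Your proposal is correct and follows essentially the same approach as the paper: an iterative construction for $t_x$ (the paper reduces to the discrete-time Lemma~\ref{1377} via the jump-time skeleton of the $x^{i,j}$, while you run the identical iteration directly in continuous time) and a piecewise-constant construction between successive jump times for the time-change systems. Your treatment is in fact more detailed than the paper's, which dispatches the second part in a single sentence and does not explicitly address the non-accumulation or stopped-trajectory points that you correctly flag.
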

\begin{proof} The proof of the first part of the lemma can be done by applying Lemma \ref{1377} to the discrete time skeleton 
of the functions $\{(x^{i,j}_t,\,t\ge0),\;i,j\in[d]\}$, exactly as for the proof of the first part of Theorem \ref{6803}, see Subsection
\ref{1842}. 

Then the proof of the existence and uniqueness of the solutions of both systems is straightforward. Let $\tau_n$, $n\ge1$ be 
the discrete, ordered sequence of jump times of the  processes $\{(x^{i,j}_t,\,t\ge0),\;i,j\in[d]\}$ (note that two functions $x^{i,j}$ 
can jump simultaneously). Then for each of these  two systems the solution can be constructed in between the times $\tau_n$ 
and $\tau_{n+1}$ in a unique way. 
$\Box$
\end{proof}

\subsection{Proof of Theorem  \ref{7462}.}  Let $(\theta_{k,n},\,k,n\ge1)$ be a family of independent random variables,
such that for each $k$, $\theta_{k,n}$ is uniformly distributed on $[n]$. We assume moreover that the family 
$(\theta_{k,n},\,k,n\ge1)$ is independent of the compound Poisson process $X=(X^{(1)},\dots,X^{(d)})$. 

Then let us construct a multitype branching forest with edge lengths in the following way. Let $\tau^{(i)}_n$, $n\ge1$ 
be the sequence of ordered jump times of the process $X^{(i)}$. We first start with $x_i$ vertices of type 
$i\in[d]$. Let $i$ such that $x_i^{-1}\tau^{(i)}_1=\min\{x_j^{-1}\tau^{(j)}_1:x_j^{-1}\tau^{(j)}_1\le T_x^{(j)},\,j\in[d]\}$. 
Then we construct the branches issued from each vertex of type $j\in[d]$ and, in the scale time of our forest 
in construction, at time $x_i^{-1}\tau_1^{(i)}$, we choose among the $x_i$ vertices of type $i$ according to $\theta_{1,x_i}$ 
the  vertex who gives birth. 

Then the construction is done recursively. Let $y_j$ be the number of vertices of type $j\in[d]$ in the 
forest at time $x_i^{-1}\tau_1^{(i)}$ and let $Y=(Y^{(1)},\dots,Y^{(d)})$ such that $Y^{(j)}$ corresponds to $X^{(j)}$ shifted at 
time $s_j=x_i^{-1}x_j\tau_1^{(i)}$, i.e. $Y^{(j)}_t=X^{(j)}_{s_j+t}$. Then let $\tau^{(j)}_{Y,n}$, $n\ge1$  be 
the sequence of ordered jump times of the process $Y^{(j)}$, and let $k$ such that $y_k^{-1}\tau^{(k)}_{Y,1}=\min\{y_j^{-1}\tau^{(j)}_{Y,1}:x_i^{-1}\tau_1^{(i)}+y_j^{-1}\tau^{(j)}_{Y,1}\le T_x^{(j)},\,j\in[d]\}$. 
Then we continue the construction of the branches issued from each vertex of type $j\in[d]$ and
at time $x_i^{-1}\tau_1^{(i)}+y_k^{-1}\tau^{(k)}_{Y,1}$, we choose among the $y_k$ vertices of type $k$ according to 
$\theta_{2,y_k}$ the vertex who gives birth. This construction is performed until all processes $X^{(i)}$ are stopped
at time $T_x^{(i)}$.

It is clear from this construction that the forest which is obtained is a multitype branching forest with edge lengths, with the 
required distribution and such that the processes $Z^{i,j}$ defined as in Definition \ref{2593} with respect to this forest 
satisfy equation (\ref{3521}).

Finally, the fact that equation, 
\[(Z^{(1)}_t,\dots,Z^{(d)}_t)=x+
\left(\sum_{i=1}^dX^{i,1}_{\int_0^t Z^{(i)}_s\,ds},\dots,\sum_{i=1}^dX^{i,d}_{\int_0^t Z^{(i)}_s\,ds}\right)\,,
\;\;\;t\ge0\]
admits a unique solution is a direct consequence of the second part of Lemma \ref{1592}. $\;\Box$

\newpage

\begin{singlespace} \small

\end{singlespace}

\end{document}